\newtheorem{thm}{Theorem}
\newtheorem{lemma}[thm]{Lemma}
\newtheorem{prop}[thm]{Proposition}
\numberwithin{thm}{section}
\newtheorem{thm?}[thm]{Theorem?}
\theoremstyle{remark}
\newtheorem{remark}{Remark}
\begin{document}
\title{The Truth About Torsion In The CM Case, II}
\author{Pete L. Clark}
\author{Paul Pollack}


%

\newcommand{\ratl}{\mathrm{ratl}}
\newcommand{\etalchar}[1]{$^{#1}$}
\newcommand{\F}{\mathbb{F}}
\newcommand{\et}{\textrm{\'et}}
\newcommand{\ra}{\ensuremath{\rightarrow}}
\newcommand{\FF}{\F}
\newcommand{\ff}{\mathfrak{f}}
\newcommand{\Z}{\mathbb{Z}}
\newcommand{\N}{\mathbb{N}}
\newcommand{\ch}{}
\newcommand{\R}{\mathbb{R}}
\newcommand{\PP}{\mathbb{P}}
\newcommand{\pp}{\mathfrak{p}}
\newcommand{\C}{\mathbb{C}}
\newcommand{\Q}{\mathbb{Q}}
\newcommand{\ab}{\operatorname{ab}}
\newcommand{\Aut}{\operatorname{Aut}}
\newcommand{\gk}{\mathfrak{g}_K}
\newcommand{\gq}{\mathfrak{g}_{\Q}}
\newcommand{\OQ}{\overline{\Q}}
\newcommand{\Out}{\operatorname{Out}}
\newcommand{\End}{\operatorname{End}}
\newcommand{\Gal}{\operatorname{Gal}}
\newcommand{\CT}{(\mathcal{C},\mathcal{T})}
\newcommand{\lcm}{\operatorname{lcm}}
\newcommand{\Div}{\operatorname{Div}}
\newcommand{\OO}{\mathcal{O}}
\newcommand{\rank}{\operatorname{rank}}
\newcommand{\tors}{\operatorname{tors}}
\newcommand{\IM}{\operatorname{IM}}
\newcommand{\CM}{\mathrm{CM}}
\newcommand{\HS}{\mathbf{HS}}
\newcommand{\Frac}{\operatorname{Frac}}
\newcommand{\Pic}{\operatorname{Pic}}
\newcommand{\coker}{\operatorname{coker}}
\newcommand{\Cl}{\operatorname{Cl}}
\newcommand{\loc}{\operatorname{loc}}
\newcommand{\GL}{\operatorname{GL}}
\newcommand{\PSL}{\operatorname{PSL}}
\newcommand{\Frob}{\operatorname{Frob}}
\newcommand{\Hom}{\operatorname{Hom}}
\newcommand{\Coker}{\operatorname{\coker}}
\newcommand{\Ker}{\ker}
\newcommand{\g}{\mathfrak{g}}
\newcommand{\sep}{\operatorname{sep}}
\newcommand{\new}{\operatorname{new}}
\newcommand{\Ok}{\mathcal{O}_K}
\newcommand{\ord}{\operatorname{ord}}
\newcommand{\mm}{\mathfrak{m}}
\newcommand{\Ohell}{\OO_{\ell^{\infty}}}
\newcommand{\ann}{\operatorname{ann}}
\renewcommand{\tt}{\mathfrak{t}}

\renewcommand{\aa}{\mathfrak{a}}
\newcommand\leg{\genfrac(){.4pt}{}}
\newcommand{\K}{\mathscr{K}}
\newcommand{\mfg}{\mathfrak{g}}
\newcommand{\cyc}{\operatorname{cyc}}

\begin{abstract}
Let $T_{\CM}(d)$ be the largest size of the torsion subgroup of an elliptic curve with complex multiplication (CM) defined 
over a degree $d$ number field.  Work of \cite{Breuer10} and \cite{CP15} showed $\limsup_{d \ra \infty} \frac{T_{\CM}(d)}{d \log \log d} \in (0,\infty)$. Here we show that the above limit supremum is precisely $\frac{e^{\gamma} \pi}{\sqrt{3}}$.  We also study -- in part, out of necessity -- the upper order of the size of the torsion subgroup of various restricted classes of CM elliptic curves over number fields.  
\end{abstract}

\maketitle

\tableofcontents

\section{Introduction}

\subsection{Asymptotics of torsion subgroups of elliptic curves}
Let $E_{/F}$ be an elliptic curve over a number field.  Then the torsion subgroup $E(F)[\tors]$ is finite, and it is a problem of 
fundamental interest to study its size as a function of $F$ and also of $d = [F:\Q]$.  For $d \in \Z^+$, we put 
\[ T(d) = \sup \# E(F)[\tors], \]
the supremum ranging over all elliptic curves defined over all degree $d$ number fields.  We know that $T(d) < \infty$ 
for all $d \in \Z^+$ \cite{Merel96}.  Merel's work gives an
explicit upper bound on $T(d)$, but it is more than exponential.  \\ \indent
In the other direction, it is known that $T(d)$ is \emph{not} bounded above by a linear function of $d$.  This and related bounds can be obtained by the following seemingly 
naive approach: start with \emph{any} number field $F_0$ and \emph{any} elliptic curve $E_{/F_0}$.  For $n \in \Z^+$, 
let $N_n$ be the product of the first $n$ prime numbers, and put 
\[ F_n = F_0(E[N_n]), \quad d_n = [F_n:\Q]. \]
An analysis of this ``naive approach'' was given by F. Breuer \cite{Breuer10}, who showed
\begin{equation}
\label{BREUEREQ1}
\inf_n \frac{\# E(F_n)[\tors]}{\sqrt{d_n \log \log d_n}} > 0. 
\end{equation}
An elliptic curve $E_{/F}$ has \textbf{complex multiplication (CM)} if \[\End(E) \coloneqq \End_{\overline{F}}(E) \supsetneq \Z, \] in which 
case $\End(E)$ is an order $\OO$ in an imaginary quadratic field $K$.  Moreover, if $F \supset K$ we 
have $\End_F(E) = \OO$, whereas if $F \not \supset K$ we have $\End_F(E) = \Z$.  If $E_{/F_0}$ has CM, then 
Breuer shows by the same ``naive approach'' that
\begin{equation}
\label{BREUEREQ2}
 \inf_n \frac{\# E(F_n)[\tors]}{d_n \log \log d_n} > 0, 
\end{equation}
and thus $T(d)$ is not bounded above by a linear function of $d$.  In view of these and other considerations, it is reasonable to define $T_{\CM}(d)$ as 
for $T(d)$ but restricting to CM elliptic curves only and $T_{\neg \CM}(d)$ as for $T(d)$ but restricting to elliptic curves 
\emph{without} CM.  Then it follows from (\ref{BREUEREQ1}) that 
\begin{equation*}
 \limsup_{d \ra \infty} \frac{T_{\neg \CM}(d)}{\sqrt{d \log \log d}} > 0 
\end{equation*}
and it follows from (\ref{BREUEREQ2}) that
\begin{equation*}
 \limsup_{d \ra \infty} \frac{T_{\CM}(d)}{d \log \log d} > 0. 
\end{equation*}
In a recent work \cite[Thm. 1]{CP15} we showed there is an effective $C > 0$ such that 
\[ \forall d \geq 3, \quad  T_{\CM}(d) \leq C d \log \log d \]
and thus we get an upper order result for $T_{\CM}(d)$: 
\begin{equation}
\label{CPEQ}
\limsup_{d \ra \infty} \frac{T_{\CM}(d)}{d \log \log d} \in (0,\infty). 
\end{equation}
Other statistical behavior of $T_{\CM}(d)$ was studied in \cite{BCP,BP16,MPP16}; in particular, its average order is $d/(\log{d})^{1+o(1)}$ and its normal 
order (in a slightly nonstandard sense made precise in \cite{BCP}) is bounded.   
\\ \\
In the present work we will improve upon (\ref{CPEQ}), as follows:

\begin{thm}
\label{MAINTHM} $\displaystyle \limsup_{d\to\infty} \frac{T_{\CM}(d)}{d\log\log{d}} = \frac{e^{\gamma} \pi}{\sqrt{3}}. $
\end{thm}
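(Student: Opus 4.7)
The plan is to obtain matching upper and lower bounds, both equal to $\frac{e^{\gamma}\pi}{\sqrt{3}}$. The source of this specific constant is the following: for an imaginary quadratic field $K$ with discriminant $D$, class number $h_K$, and unit group of order $w_K$, the analytic class number formula gives $\frac{w_K\, L(1,\chi_K)}{2\, h_K} = \frac{\pi}{\sqrt{|D|}}$, which over all imaginary quadratic $K$ is maximized at $K=\Q(\zeta_3)$ (where $|D|=3$), producing $\pi/\sqrt{3}$; the extra factor of $e^\gamma$ will arise from Mertens' theorem.

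For the lower bound, take $E_0/\Q$ with $j(E_0)=0$, so $\End(E_0)=\OO_K=\Z[\zeta_3]$ for $K=\Q(\zeta_3)$. For $y$ large, set $N = N_y = \prod_{5 \leq \ell \leq y}\ell$ (avoiding the exceptional primes $2, 3$ cleanly) and $F_y = K(E_0[N])$. Complex multiplication gives $[F_y:K] = \#(\OO_K/N)^*/w_K$, hence $[F_y:\Q] = \#(\OO_K/N)^*/3$, while $E_0[N]\subset E_0(F_y)$ gives $\#E_0(F_y)[\tors]\geq N^2$. Write
\[
\frac{N^2}{\#(\OO_K/N)^*} \;=\; \prod_{5\leq \ell \leq y}\bigl((1-1/\ell)(1-\chi_K(\ell)/\ell)\bigr)^{-1}.
\]
Mertens' theorem $\prod_{\ell\leq y}(1-1/\ell)^{-1}\sim e^\gamma\log y$, together with the convergent Euler product $\prod_{\ell}(1-\chi_K(\ell)/\ell)^{-1}=L(1,\chi_{-3})=\pi/(3\sqrt{3})$ and $\log y\sim \log\log [F_y:\Q]$ (from the prime number theorem), then yields the limit $e^{\gamma}\pi/\sqrt{3}$.

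For the upper bound, take any CM $E/F$ with CM by an order $\OO$ of conductor $\ff$ in $K$; put $T = \#E(F)[\tors]$, $d = [F:\Q]$. If $K\not\subset F$, then $\End_F(E)=\Z$ and $T = O(\sqrt{d\log\log d})$ by the argument behind \eqref{BREUEREQ1}, so this case contributes nothing. Assume $K\subset F$, and let $N$ be the exponent of $E(F)[\tors]$. CM theory (via Weber functions and the ray class field of $\OO$ of conductor $N$) gives $d \geq \frac{2\,h(\OO)\,\#(\OO/N)^*}{w_\OO}\,(1+o(1))$, and a separate argument shows that passing from $\OO_K$ to a proper suborder does not increase $T/d$, reducing to $\OO = \OO_K$. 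Using $T\leq N^2$, a Landau-type refinement of Mertens for the prime factors of $N$ (exploiting that the largest prime factor of $N$ is at most $\log N\cdot(1+o(1))$, since $N$ is at least the primorial of its distinct prime divisors), and the class number formula, one obtains
\[
\frac{T}{d} \;\leq\; \frac{w_K}{2\,h_K}\prod_{\ell\mid N}\bigl((1-1/\ell)(1-\chi_K(\ell)/\ell)\bigr)^{-1}(1+o(1)) \;\leq\; \frac{\pi e^\gamma}{\sqrt{|D|}}\log\log d\cdot (1+o(1)) \;\leq\; \frac{\pi e^\gamma}{\sqrt{3}}\log\log d\cdot(1+o(1)),
\]
using $\log\log N\leq\log\log d+O(1)$ and $|D|\geq 3$.

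The main obstacle is the upper-bound step: converting the informal reductions ``WLOG $\OO=\OO_K$'' and ``WLOG $E(F)[\tors]=E[N]$'' into rigorous $(1+o(1))$ estimates valid for every CM $E/F$. The first requires showing that the increase in $h(\OO)$ and $\#(\OO/N)^*$ afforded by a non-maximal order cannot be recovered by the slight change in allowed torsion; this is precisely the motivation for the study of restricted torsion classes promised in the abstract. The second requires handling the case $E(F)[\tors]\cong\Z/a\times\Z/b$ with $a\ll b$, where $T=ab\ll b^2$ and one must track the cyclic part of the torsion via the Weber function field, rather than simply bounding $T\leq b^2$.
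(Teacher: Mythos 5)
Your overall architecture (lower bound from the $j=0$ tower, upper bound by maximizing $\frac{w_K}{2h_K}L(1,\chi_K)=\frac{\pi}{\sqrt{|\Delta_K|}}$ over $K$) is the right one, but there are several genuine gaps, one of which is an outright wrong step. The disposal of the case $K\not\subset F$ by appeal to ``the argument behind (\ref{BREUEREQ1})'' does not work: (\ref{BREUEREQ1}) is a \emph{lower} bound for a particular tower, and no upper bound $O(\sqrt{d\log\log d})$ is known for curves with $\End_F(E)=\Z$ in general --- that is precisely the open Hindry--Silverman question. For CM curves whose CM is not $F$-rational, getting even $T^{\circ}_{\OO\text{-}\CM}(d)=O(\sqrt{d\log\log d})$ for a \emph{fixed} order is Theorem \ref{BIGTHM6}a) of the paper and needs real substance (the constraint $a\in\{1,2\}$ on $E(F)[\tors]\cong\Z/a\times\Z/ab$ over $F\not\supset K$, full $c_1$-torsion over $FK$, and the isogeny to a maximal-order curve); and before one can use it one must reduce to finitely many orders, which the paper does via $T^{\circ}_{\CM}(d)\le T^{\bullet}_{\CM}(2d)$ together with Theorems \ref{BIGTHM3} and \ref{BIGTHM4}. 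A second serious gap is uniformity in $K$ in your final chain of inequalities: the partial Euler product $\prod_{\ell\le y}(1-\chi_K(\ell)/\ell)^{-1}$ does not converge to $L(1,\chi_K)$ uniformly in the conductor, so the step from the product over $\ell\mid N$ to $\frac{\pi e^{\gamma}}{\sqrt{|D|}}\log\log d\,(1+o(1))$ is unjustified when $|\Delta_K|$ is large relative to $d$. The paper handles this with a uniform lower bound $\varphi_K(\aa)\gg |\aa|/(\log|\Delta_K|\cdot\log\log|\aa|)$ (Theorem \ref{thm:phibound}, proved via a character-sum lemma) combined with Siegel's theorem $h_K\gg|\Delta_K|^{1/3}$ to discard all but finitely many fields; some such input is unavoidable.

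Two further points. The reductions you defer (``WLOG $\OO=\OO_K$'' and the treatment of the cyclic part when $E(F)[\tors]\cong\Z/a\times\Z/ab$ with $b$ large) are not minor: the paper supplies them via a divisibility theorem $\#E(F)[\tors]\mid\#E'(F)[\tors]$ for an isogenous maximal-order curve, a conductor-weighted bound $\limsup \ff\,T/(d\log\log d)\le e^{\gamma}\pi/\sqrt{|\Delta_K|}$ to cap the conductor, and the fact that full $ab$-torsion is attained in an extension of degree at most $b$ (not $b^2$), which is what makes the inequality $a^2b\le (1+o(1))\frac{e^{\gamma}\pi}{\sqrt{|\Delta|}}d\log\log d$ come out with the correct exponent. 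Finally, your lower bound as written is off by a factor of $w_K=6$: for a fixed model $E_0$ with $j=0$, the full torsion field $K(E_0[N])$ generically has degree $\#(\OO_K/N)^{\times}$ over $K$, not $\#(\OO_K/N)^{\times}/w_K$; the latter is the degree of the Weber function field, and one must twist $E_0$ over that field (with the twist, hence the curve, depending on $N$) to realize full $N$-torsion there. Without the twist your construction only yields $\frac{e^{\gamma}\pi}{6\sqrt{3}}$, which defeats the purpose of the theorem.
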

\noindent
(The easier, lower bound half of Theorem \ref{MAINTHM} was noted in \cite[Remark 1.10]{MPP16}.) In $\S$1.4 we will deduce Theorem \ref{MAINTHM} from results stated later in the introduction.

\begin{remark}
As mentioned above, the constant $C$ appearing in \cite[Thm. 1]{CP15} is effective.  This aspect is not addressed in Theorem \ref{MAINTHM} or anywhere in the present work.  In fact, though $C$ is effectively \emph{computable}, we did not effectively 
\emph{compute} it, and it is not a trivial matter to do so.  It is our understanding that such a computation is in progress by some of our colleagues.
\end{remark}

\subsection{Refining the truth I}
As mentioned above, it is natural to distinguish between the cases in which the CM is or is not rationally defined over the 
ground field.  In this section we concentrate on the former case: let $T_{\CM}^{\bullet}(d)$ be as for $T_{\CM}(d)$ but restricting to CM elliptic curves $E_{/F}$ for number fields $F \supset K$.  
\\ \\
We will also examine the dependence of the bound on the CM field and the CM order.  Let $\K$ be a set of imaginary quadratic fields.  
We define $T_{\CM(\K)}(d)$ to be as for $T_{\CM}(d)$ but with the CM field restricted to lie in $\K$.  When $\K = \{K\}$ we 
write $T_{\CM(K)}(d)$ in place of $T_{\CM(\{K\})}(d)$.  Once again we denote restriction to number fields $F \supset K$ 
by a superscripted $\bullet$.

\begin{thm}\label{lem:reducetofinite}
\label{BIGTHM3} Let $\epsilon > 0$. There is $\Delta_0 = \Delta_0(\epsilon) < 0$ such that: if $\K$ is the collection of imaginary quadratic fields with $\Delta_K < \Delta_0$, then
\begin{equation*}
 \limsup_{d\to\infty} \frac{T_{\CM(\K)}^{\bullet}(d)}{d\log\log{d}} < \epsilon. 
\end{equation*}
\end{thm}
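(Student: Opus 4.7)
My plan is to refine the proof of the upper bound \eqref{CPEQ} from \cite{CP15} so as to extract the exact dependence of the implied constant on the CM field $K$, and to show that this constant decays like $|\Delta_K|^{-1/2}$. Since $F \supset K$, the torsion subgroup $E(F)[\tors]$ is stable under the CM action of $\OO \subseteq \OO_K$, hence is isomorphic to $\mathfrak{b}^{-1}/\OO$ for some integral ideal $\mathfrak{b}$ of $\OO$, giving $\#E(F)[\tors] = N(\mathfrak{b})$. I would first handle the maximal-order case $\OO = \OO_K$; a non-maximal order of conductor $\ff$ has discriminant $\ff^2 |\Delta_K|$ and produces an even smaller bound of shape $1/\sqrt{\ff^2|\Delta_K|}$, so those orders only help.

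For $\OO = \OO_K$, CM theory identifies $K(E[\mathfrak{b}])$ (essentially) with the ray class field of $K$ of conductor $\mathfrak{b}$, of degree $\asymp h_K\, \phi_{\OO_K}(\mathfrak{b})/w_K$ over $K$. Since $F \supset K(E[\mathfrak{b}])$, this forces $\phi_{\OO_K}(\mathfrak{b}) \ll d\, w_K/h_K$. A Mertens-type estimate for the partial Euler product of $\zeta_K$ then gives
$$\frac{N(\mathfrak{b})}{\phi_{\OO_K}(\mathfrak{b})} \;=\; \prod_{\pp\mid\mathfrak{b}}\left(1-\tfrac{1}{N(\pp)}\right)^{-1} \;\leq\; e^{\gamma}\, L(1,\chi_K)\,\log y \cdot (1+o_K(1)),$$
where $y$ is the largest prime-norm occurring in $\mathfrak{b}$. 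For $\mathfrak{b}$ extremizing the ratio one has $\log N(\mathfrak{b}) \asymp y$, so $\log y \asymp \log\log N(\mathfrak{b}) \leq \log\log d + O(1)$. Inserting the analytic class number formula $L(1,\chi_K) = 2\pi h_K/(w_K\sqrt{|\Delta_K|})$, the factors $h_K$ and $w_K$ cancel cleanly and one obtains
$$\#E(F)[\tors] \;\leq\; \frac{\pi e^{\gamma}}{\sqrt{|\Delta_K|}}\, d\log\log d \cdot (1+o(1)).$$
Given $\epsilon > 0$, choosing $|\Delta_0|$ so that $\pi e^{\gamma}/\sqrt{|\Delta_0|} < \epsilon/2$ then yields the theorem.

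The main obstacle is that the $o(1)$ just above must be uniform in $K \in \K$: Mertens' theorem for $\zeta_K$ has an error term depending on $K$ through an effective prime ideal theorem, whereas Theorem \ref{BIGTHM3} demands a single threshold $d_0(\epsilon)$ valid for every $K \in \K$ simultaneously. I would circumvent this by proving a \emph{uniform} one-sided inequality
$$\prod_{N(\pp)\leq y}\left(1-\tfrac{1}{N(\pp)}\right)^{-1} \;\leq\; C\, e^{\gamma} L(1,\chi_K)\,\log y,$$
with $C$ absolute, valid for all $K$ and all $y \geq 2$. Writing the logarithmic sum as a sum over rational primes grouped by splitting behaviour, this reduces to the uniform-in-$K$ estimate $\sum_{p\leq y} \chi_K(p)/p = \log L(1,\chi_K) + O(1)$, which follows from standard character-sum techniques (Polya--Vinogradov, or the convexity bound for $L(s,\chi_K)$). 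The absolute multiplicative constant $C$ is then absorbed by taking $|\Delta_0|$ slightly larger.
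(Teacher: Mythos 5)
Your algebraic architecture is the same as the paper's: reduce to the maximal order, identify $E(F)[\tors]$ with $\OO_K/\mathfrak{b}$ for an ideal $\mathfrak{b}$, use the ray class field containment to force $\varphi_K(\mathfrak{b}) \ll d\, w_K/h_K$, and then bound the ratio $N(\mathfrak{b})/\varphi_K(\mathfrak{b})$ by a partial Euler product. The gap is exactly in the step you flag as ``the main obstacle.'' The uniform inequality
\[ \prod_{N(\pp)\le y}\Bigl(1-\tfrac{1}{N(\pp)}\Bigr)^{-1}\;\le\; C\,e^{\gamma}\,L(1,\chi_K)\,\log y, \]
with $C$ absolute, valid for all $K$ and all $y\ge 2$, is not available. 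After stripping off the Mertens factor $\prod_{p\le y}(1-1/p)^{-1}\sim e^{\gamma}\log y$, it amounts to $\prod_{p\le y}(1-\chi_K(p)/p)^{-1}\ll L(1,\chi_K)$ uniformly. For bounded $y$ the left side is $\gg 1$, while $L(1,\chi_K)$ is known unconditionally to take values tending to $0$ along sequences of discriminants (extreme values of size $\asymp 1/\log\log|\Delta_K|$ occur), so the inequality already fails there; and in the range $y\asymp|\Delta_K|$ it would force $L(1,\chi_K)\gg 1/\log|\Delta_K|$, i.e., the nonexistence of Siegel zeros. P\'olya--Vinogradov and convexity bound character sums from above, but they cannot prevent the primes $p>y$ from conspiring to make $L(1,\chi_K)$ far smaller than the partial product over $p\le y$; unconditionally one only has $L(1,\chi_K)\gg_{\epsilon}|\Delta_K|^{-\epsilon}$, and that ineffectively. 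Consequently the clean constant $\pi e^{\gamma}/\sqrt{|\Delta_K|}$ cannot be made uniform in $K$ by this route; the paper establishes it only for fixed $K$ (Theorem \ref{BIGTHM4}).

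The repair --- and what the paper actually does --- is to prove the weaker but genuinely uniform estimate $\prod_{p\le y}\bigl(1-\chi_K(p)/p\bigr)^{-1}\ll\log|\Delta_K|$ by an elementary Dirichlet-convolution (Sylvester-style) argument, which yields $\varphi_K(\aa)\gg\frac{1}{\log|\Delta_K|}\cdot\frac{|\aa|}{\log\log|\aa|}$ with an absolute implied constant. One then does not cancel $h_K$ against $L(1,\chi_K)$ at all: combining with $\varphi_K(\aa)\le\frac{w_K}{2}\cdot\frac{d}{h_K}$ gives $|\aa|\ll\frac{\log|\Delta_K|}{h_K}\,d\log\log d$, and Siegel's theorem in the form $h_K\gg|\Delta_K|^{1/3}$ makes the factor $\log|\Delta_K|/h_K$ smaller than any prescribed multiple of $\epsilon$ once $\Delta_K<\Delta_0(\epsilon)$. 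This produces a constant tending to $0$ as $|\Delta_K|\to\infty$ --- which is all Theorem \ref{BIGTHM3} requires --- at the cost of ineffectivity and of abandoning the $|\Delta_K|^{-1/2}$ decay rate your sketch aims for.
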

\noindent
We will prove Theorem \ref{BIGTHM3} in $\S$3.  A key ingredient is a lower order result for Euler's totient 
function across all imaginary quadratic fields which improves upon \cite[Thm. 8]{CP15}.  This result is established 
in $\S$2.  
\\ \\
Theorem \ref{BIGTHM3} motivates us to concentrate on a fixed imaginary quadratic field as well as a fixed imaginary 
quadratic order.  The next two results address this.

\begin{thm}\label{lem:fixedK} \label{BIGTHM4}  Fix an imaginary quadratic field $K$.  For $d \in \Z^+$, let 
$\mathfrak{f}T_{\CM(K)}^{\bullet}(d)$ denote the maximum value of $\mathfrak{f} \# E(F)[\tors]$ as 
$F$ ranges over all degree $d$ number fields containing $K$ and $E_{/F}$ ranges over all elliptic curves 
with CM by an order $\OO$ of $K$: here $\ff$ is the conductor of $\OO$.  Then we have 

\[ \limsup_{d\to\infty} \frac{\mathfrak{f} T_{\CM(K)}^{\bullet}(d)}{d\log\log{d}} \leq \frac{e^{\gamma} \pi}{\sqrt{|\Delta_K|}}. \]
\end{thm}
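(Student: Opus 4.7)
The plan is to translate the hypothesis of large torsion in $E(F)$ into a Galois-theoretic degree inequality via CM division fields, then invoke the sharpened lower order result for the Euler totient function across imaginary quadratic orders promised by Section 2.

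Let $E_{/F}$ have CM by $\OO\subseteq\OO_K$ of conductor $\ff$, with $F\supseteq K$ and $[F:\Q]=d$; set $T=E(F)[\tors]$. Standard CM theory gives $F\supseteq K(j(E))$, and $[K(j(E)):K]=h(\OO)$. Choose $P\in T$ whose $\OO$-annihilator $\aa=\ann_{\OO}(P)$ has maximal $\OO$-index $N:=\#(\OO/\aa)$. Applying class field theory to the Weber division field $K(j(E),h(E,P))\subseteq F$ yields an inequality of the form
\[
d\;\geq\;\frac{2\,h(\OO)\,\phi_{\OO}(\aa)}{w(\OO)},\qquad \phi_{\OO}(\aa):=\#(\OO/\aa)^\times,\quad w(\OO):=\#\OO^\times.
\]
Because $E[N]$ has $\OO_K$-rank one when $\gcd(N,\ff)=1$ (the essential case), every $\OO$-submodule of $E[N]$ is cyclic, giving $\#T=N$ up to a bounded adjustment when $\gcd(N,\ff)>1$. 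Plugging in the class number formula for orders,
\[
h(\OO)\;=\;\frac{h_K\ff}{[\OO_K^\times:\OO^\times]}\prod_{p\mid\ff}\Bigl(1-\leg{\Delta_K}{p}\tfrac{1}{p}\Bigr),
\]
cancels the $h_K$ and $w_K$ factors with those appearing in the lower bound for $\phi_{\OO}$ below, and isolates a factor of $\ff$ on the right-hand side, exactly what is needed to absorb the $\ff$ on the left of the target bound.

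The decisive input is the refined Mertens/totient bound of Section 2: uniformly in $\OO$,
\[
\phi_{\OO}(\aa)\;\geq\;\frac{N\,w_K\sqrt{|\Delta_K|}}{2\pi h_K\,e^{\gamma}\,\log\log N}(1+o(1))
\]
(with the appropriate $\ff$-dependent correction when $\gcd(N,\ff)>1$). The constant here comes from the residue $\kappa_K=2\pi h_K/(w_K\sqrt{|\Delta_K|})$ of $\zeta_K(s)$ at $s=1$ via the Mertens third-theorem analog $\prod_{N\mathfrak{p}\leq x}(1-1/N\mathfrak{p})^{-1}\sim e^{\gamma}\kappa_K\log x$. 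Substituting into the degree inequality, solving for $N$, and using $\#T=N(1+o(1))$ yields
\[
\ff\cdot\#T\;\leq\;\frac{e^{\gamma}\pi}{\sqrt{|\Delta_K|}}\,d\log\log d\,(1+o(1)),
\]
which is the claim.

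The main obstacle lies in Section 2 rather than here: one needs a lower bound on $\phi_{\OO}(\aa)/N$ with the correct order $1/\log\log N$ and the sharp constant, delivered uniformly across orders $\OO$ in $K$ (and, for Theorem~\ref{BIGTHM3}, uniformly across $K$ as well). A secondary subtlety is tracking the $\ff$-dependent correction factors when $\gcd(N,\ff)>1$, where $\OO/\aa$ is no longer a quotient of the Dedekind domain $\OO_K$; this is handled by matching the product in the class number formula against the Euler-product discrepancy between $\phi_{\OO}$ and $\phi_{\OO_K}$. Once the Section~2 estimate is in hand, the present theorem reduces to the cancellation bookkeeping sketched above.
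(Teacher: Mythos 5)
Your overall shape (a degree lower bound from CM class field theory combined with a sharp Mertens-type lower bound for a totient function) is the right one, but two load-bearing steps are asserted rather than proved, and both are exactly where the difficulty of this theorem lives. First, the reduction ``$\#E(F)[\tors]=N$ up to a bounded adjustment,'' where $N=\#(\OO/\ann_\OO(P))$ for a point $P$ of maximal annihilator index. Writing $E(F)[\tors]\cong \Z/a\Z\times\Z/ab\Z$, the module generated by a single point only sees an index somewhere between $ab$ and $(ab)^2$, while the torsion subgroup has order $a^2b$; closing this gap requires knowing that $E(F)[\tors]$ is $\OO$-cyclic (and then a degree formula for the Weber field of a single $\OO$-generator over a \emph{non-maximal} order), neither of which is routine when $\gcd(N,\ff)>1$. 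The paper avoids this entirely by a different mechanism: by \cite[Thm.~7]{CP15} there is an extension $L/F$ of degree at most $b$ over which the \emph{full} $ab$-torsion is rational, so one can apply the clean formula $[W(ab,\OO):K(\ff)]=\#(\OO/ab\OO)^\times/w$ for full $N$-torsion Weber fields and divide by $b$ at the end. Your proposal has no substitute for this step, and a ``bounded adjustment'' that depends on $\ff$ would not suffice, since the supremum in the theorem ranges over all orders of $K$ simultaneously.

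Second, the totient input you invoke is not what Section~2 provides. Theorem~\ref{thm:phibound} is a \emph{lossy} bound (constant $c/\log|\Delta_K|$) for $\varphi_K$ on ideals of the maximal order, uniform in $K$; it is used for Theorem~\ref{BIGTHM3}, not here. What you need is a sharp Mertens asymptotic for $\phi_\OO$ on ideals of an arbitrary order of the fixed field $K$, with the $(1+o(1))$ uniform in $\ff$ and with the $\ff$-dependent Euler factors at primes dividing $\ff$ tracked exactly -- this is the ``cancellation bookkeeping'' you defer, and it is where both the factor of $\ff$ on the left-hand side and the sharp constant $e^\gamma\pi/\sqrt{|\Delta_K|}$ must be extracted. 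The paper sidesteps the need for any Mertens theorem for non-maximal orders via the identity
\[
\#(\OO/N\OO)^\times\,[K(\ff):K^{(1)}]=\varphi_K(N\ff)\,\frac{\varphi(N)}{\varphi(N\ff)}\,\frac{w}{w_K}
\]
together with the elementary inequality $\varphi(N\ff)/\varphi(N)\le \ff$, which reduces everything to $\varphi_K(ab\ff\OO_K)$ -- a single ideal totient over the fixed Dedekind ring $\OO_K$ -- so that the classical fixed-$K$ liminf \eqref{eq:landauK} applies with no uniformity issues. Until you supply (i) the passage from a single rational point to the full torsion subgroup and (ii) a genuinely uniform sharp lower bound for $\phi_\OO$ across all conductors, the argument is incomplete.
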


\begin{thm}
\label{BIGTHM5}
Let $\OO$ be an order in the imaginary quadratic field $K$, with conductor $\ff$ and discriminant $\Delta = \ff^2 \Delta_K$.  Let $T_{\OO\text{-}\CM}(d)$ be the maximum value of $\#E(F)[\tors]$ as $F$ ranges over all degree $d$ number 
fields containing $K$ and $E_{/F}$ ranges over all $\OO$-CM elliptic curves.  Then 
\[ \limsup_{d \ra \infty} \frac{ T_{\OO\text{-}\CM}^{\bullet}(d)}{d \log \log d} = \frac{e^{\gamma} \pi}{\sqrt{|\Delta|}} = \frac{e^{\gamma} \pi}{\ff \sqrt{|\Delta_K|}}. \]
\end{thm}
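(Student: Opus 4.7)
The plan is to prove the upper and lower bounds separately. The upper bound is a direct consequence of Theorem~\ref{BIGTHM4}: since the conductor $\ff$ of $\OO$ is fixed, every $\OO$-CM elliptic curve counted in $T_{\OO\text{-}\CM}^{\bullet}(d)$ contributes $\ff\cdot\#E(F)[\tors]$ to the maximum defining $\ff T_{\CM(K)}^{\bullet}(d)$, whence $\ff\cdot T_{\OO\text{-}\CM}^{\bullet}(d)\le \ff T_{\CM(K)}^{\bullet}(d)$. Dividing by $\ff$ and applying Theorem~\ref{BIGTHM4} gives $\limsup_{d\to\infty} T_{\OO\text{-}\CM}^{\bullet}(d)/(d\log\log d)\le e^{\gamma}\pi/(\ff\sqrt{|\Delta_K|})$.

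For the matching lower bound I would run a precise version of Breuer's ``naive approach''. Fix an $\OO$-CM elliptic curve $E$ defined over the ring class field $F_0$ of $\OO$, so that $[F_0:\Q] = 2h_\OO$. For each $n\ge 1$, let $N_n$ denote the product of the first $n$ rational primes, and set $F_n = F_0(E[N_n])$, $d_n = [F_n:\Q]$. Since $F_0\supset K$, the mod-$N$ Galois representation takes values in $(\OO/N_n\OO)^{\times}$; after possibly enlarging $F_0$ by a fixed finite extension, the Main Theorem of Complex Multiplication lets me arrange its image to have index exactly $w_\OO$, so $d_n = 2h_\OO\cdot\#(\OO/N_n\OO)^{\times}/w_\OO$. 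Of course $\#E(F_n)[\tors]\ge N_n^2$.

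The asymptotic of $\#E(F_n)[\tors]/(d_n\log\log d_n)$ then follows by combining standard inputs: the local formula $\#(\OO/p\OO)^{\times} = (p-1)(p-\chi_K(p))$ for $p\nmid\Delta$ and $p(p-1)$ for $p\mid\Delta$ (with $\chi_K$ the Kronecker character of $K$); Mertens' theorem $\prod_{p\le x}(1-1/p)^{-1}\sim e^{\gamma}\log x$; convergence of the $\chi_K$-twisted partial Euler product to $L(1,\chi_K)\prod_{p\mid\ff}(1-\chi_K(p)/p)$; the analytic class number formula $L(1,\chi_K) = 2\pi h_K/(w_K\sqrt{|\Delta_K|})$; the classical expression $h_\OO = (\ff h_K w_\OO/w_K)\prod_{p\mid\ff}(1-\chi_K(p)/p)$ for the class number of a non-maximal order; and $\log\log d_n\sim\log p_n$ where $p_n$ is the $n$-th prime. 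The factors of $h_K$, $h_\OO$, $w_K$, $w_\OO$, and the Euler product over $p\mid\ff$ then all telescope, leaving $\#E(F_n)[\tors]/(d_n\log\log d_n)\to e^{\gamma}\pi/(\ff\sqrt{|\Delta_K|})$.

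I expect the main obstacle to be the Galois-image input in the second paragraph: obtaining the exact constant $w_\OO$ (rather than merely a bounded index) requires invoking the Main Theorem of Complex Multiplication with care at primes dividing $\ff$, where the local order $\OO_p$ is non-maximal. Once that input is secure, the analytic side is routine bookkeeping.
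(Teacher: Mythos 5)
Your upper bound is exactly the paper's: restricting the maximum defining $\ff T_{\CM(K)}^{\bullet}(d)$ to the single order $\OO$ gives $\ff\, T_{\OO\text{-}\CM}^{\bullet}(d)\le \ff T_{\CM(K)}^{\bullet}(d)$, and Theorem \ref{BIGTHM4} finishes. The analytic bookkeeping in your lower bound is also essentially the paper's (the paper phrases it through $\varphi_K(N_n\ff)$ and the identity $\#(\OO/N\OO)^{\times}\,[K(\ff):K^{(1)}]=\varphi_K(N\ff)\,(\varphi(N)/\varphi(N\ff))\,(w/w_K)$, which is equivalent to your telescoping with $h_\OO$, $w_\OO$ and the class number formula for non-maximal orders). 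The target degree $d_n=2h_\OO\#(\OO/N_n\OO)^{\times}/w_\OO$ is the right one.

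The gap is precisely at the point you flagged, and your proposed repair cannot work. For a fixed model $E_{/F_0}$ over the ring class field, $[F_0(E[N_n]):F_0]$ is the size of the image of $\mfg_{F_0}$ in $(\OO/N_n\OO)^{\times}$; CM theory forces this image to surject onto $(\OO/N_n\OO)^{\times}/\OO^{\times}$, so its index is \emph{at most} $w=\#\OO^{\times}$, and for a generic model the index is $1$ rather than $w$. The naive tower is then $w$ times too big and yields only the constant $e^{\gamma}\pi/(w\sqrt{|\Delta|})$ --- the paper makes exactly this point in \S 6.3. Enlarging $F_0$ to a fixed finite extension $F_0'$ goes the wrong way: it may shrink the mod-$N_n$ image, but $[F_0'(E[N_n]):\Q]\ge [F_0(E[N_n]):\Q]$ always, so the ratio $N_n^2/d_n$ never improves. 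What is actually needed is an $N$-dependent twist: pass to the Weber function field $K(\ff)(\mathfrak{h}(E[N_n]))$, whose degree over $K(\ff)$ is exactly $\#(\OO/N_n\OO)^{\times}/w$, over which the mod-$N_n$ image lies in $\Aut(E)=\OO^{\times}$ and is therefore given by a character valued in $\OO^{\times}$; twisting $E$ by that character produces full $N_n$-torsion over that field. This is the content of the result of Bourdon--Clark that the paper invokes at this step, and without it (or an equivalent) your argument proves the lower bound only with an extra factor of $1/w$, which for $\Delta=-3$ loses a factor of $6$.
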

\noindent
We will prove Theorems \ref{BIGTHM4} and \ref{BIGTHM5} in $\S$4.

\subsection{Refining the truth II} 
We turn now to upper order results for $\#E(F)[\tors]$ when the CM is \emph{not} defined over the ground field $F$: define $T_{\CM}^{\circ}(d)$ as for $T_{\CM}(d)$ but restricting to CM elliptic curves $E_{/F}$ for number fields $F \not \supset K$.  As above, we will want 
to impose this restriction along with restrictions on the CM field and CM order, and we denote restriction to number fields 
$F \not \supset K$ by a superscripted $\circ$.  
\\ \indent
If $E_{/F}$ is an elliptic curve defined over a number 
field $F$ not containing $K$, then $\# E(F)[\tors] \leq \# E(FK)[\tors]$, and thus we have
\begin{equation}
\label{OBVIOUSEQ}
 T_{\CM}^{\circ}(d) \leq T_{\CM}^\bullet(2d).
\end{equation}
\noindent
Although (\ref{OBVIOUSEQ}) will be of use to us, it is too crude to allow us to deduce Theorem \ref{MAINTHM} 
from the results of the previous section.  To overcome this we establish the following result, which \emph{almost} 
computes the true upper order of $T_{\OO\text{-}\CM}^{\circ}(d)$.

\begin{thm}
\label{BIGTHM6}
Let $\OO$ be an order in an imaginary quadratic field $K$.
\begin{enumerate}
	\item[a)] There is a constant $C(\OO)$ such that 
\[\forall d \geq 3, \quad T_{\OO\text{-}\CM}^{\circ}(d) \leq C(\OO) \sqrt{d \log \log d}. \]
\item[b)] We have $\limsup_{d \ra \infty} \frac{T_{\OO\text{-}\CM}^{\circ}(d)}{\sqrt{d}} > 0$.
\end{enumerate}
\end{thm}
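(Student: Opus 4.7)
The plan is to treat the two parts separately.

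For part (a), I would reduce to the case $F\supset K$ by base-changing to $F':=FK$ (so $[F':\Q]=2d$) and exploiting the action of the non-trivial element $c\in\Gal(F'/F)$ on $M:=E(F')[\tors]$. Since the $\OO$-CM of $E$ is rationally defined over $F'$ but not over $F$, for $\alpha\in\OO$ and $x\in M$ we have $c(\alpha x)=\overline{\alpha}\,c(x)$, i.e.\ $c$ is an $\OO$-antilinear involution, and $E(F)[\tors]=M^c$. The heart of the argument is the inequality
\begin{equation*}
|M^c|^{2}\;\leq\;4|\Delta|\cdot |M|.
\end{equation*}
To prove this, set $M^-:=\{x\in M:c(x)=-x\}$; then $M^c\cap M^-\subset M[2]$ has order at most $4$, giving $|M^c|\cdot|M^-|\leq 4|M|$. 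On the other hand, with $\delta:=\tau-\overline{\tau}$ for a $\Z$-generator $\tau$ of $\OO$, one computes $\delta^2=\Delta$, so $\delta$ is an isogeny $E\to E$ of degree $|\Delta|$; since $\overline{\delta}=-\delta$, antilinearity forces $\delta\cdot M^c\subset M^-$ with kernel inside $E[\delta]$, of order $|\Delta|$. Hence $|M^-|\geq |M^c|/|\Delta|$, and combining yields the display. Applying Theorem~\ref{BIGTHM5} to $E/F'$ to bound $|M|\ll_{\OO} d\log\log d$ then completes (a).

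For part (b), I would construct explicit examples. Fix an $\OO$-CM elliptic curve $E_0$ over $F_0:=\Q(j(E_0))$; since $j(E_0)\in\R$ while $K\not\subset\R$, we have $K\not\subset F_0$ and $[F_0:\Q]=h(\OO)$. For primes $N$ splitting in $K$ and coprime to $\ff$, set $L:=F_0(E_0[N])$; by classical CM Galois representation theory, for all but finitely many such $N$ the image of $\Gal(L/F_0)$ is the full normalizer $(\OO_K/N\OO_K)^{\times}\rtimes\langle c\rangle$ of the Cartan. Under the identification $\OO_K/N\OO_K\cong\F_N\times\F_N$ in which $c$ swaps the two factors, fix an $\OO_K/N\OO_K$-generator $e_0$ of $E_0[N]$ and set $P:=(1,1)\cdot e_0$. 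A direct check shows the stabilizer of $P$ in the normalizer is exactly $\{1,c\}$, so the fixed field $F:=L^{\{1,c\}}$ satisfies $P\in E_0(F)$ of order $N$, $K\not\subset F$ (since $c$ is not in the Cartan), and $[F:\Q]=h(\OO)(N-1)^2$. Letting $N\to\infty$ along such primes yields $\limsup_d T^{\circ}_{\OO\text{-}\CM}(d)/\sqrt{d}\geq 1/\sqrt{h(\OO)}>0$.

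The main technical obstacle in part (a) is that the $\OO$-module $M$ is not free at primes dividing $\ff$; fortunately the proof of the displayed inequality uses only that $\delta$ is an isogeny of degree $|\Delta|$ and makes no freeness assumption, so it goes through without modification. In part (b), the required surjectivity onto the normalizer of the Cartan is a standard consequence of CM Galois representation theory for all but finitely many $N$, which is enough for a $\limsup$ statement; if one only has bounded index in the normalizer, the construction still produces a positive limsup (with a worse implied constant).
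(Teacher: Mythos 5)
Your part~a) is correct and takes a genuinely different route from the paper. The paper first passes to the $\OO_K$-CM curve via the cyclic $\ff$-isogeny of \cite[Prop.~25]{TORS1}, then invokes structural results from \cite{BCS16} and \cite{BC17} (the shape $\Z/a\Z\times\Z/ab\Z$ with $a\le 2$, full $c_1$-torsion over $FK$ for the part of $ab$ prime to $\Delta_K$, and the \S 6.3 analysis at ramified primes) to conclude $\#E(F)[\tors]\ll_{\OO}\sqrt{\#E'(FK)[\tors]}$. Your argument replaces all of this with the elementary observation that the generator $c$ of $\Gal(FK/F)$ acts $\OO$-antilinearly on $M=E(FK)[\tors]$, so that $|M^c|\cdot|M^-|\le |M^c\cap M^-|\cdot|M|\le 4|M|$ while $\delta=\tau-\overline{\tau}$ (with $\delta^2=\Delta$, $\overline{\delta}=-\delta$, $\deg\delta=|\Delta|$) maps $M^c$ into $M^-$ with kernel of size at most $|\Delta|$; this gives $|M^c|^2\le 4|\Delta|\,|M|$ with no freeness hypothesis, as you note. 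Both proofs then finish by quoting Theorem~\ref{BIGTHM5} over $FK$. Your inequality is self-contained and arguably cleaner; the paper's route yields the sharper constant $2\ff\mathfrak{b}$ in place of $2\sqrt{|\Delta|}$, though this is immaterial for the statement.

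Part~b) has a gap. You assert that for all but finitely many split primes $N$ the mod-$N$ image of $\Gal(\overline{F_0}/F_0)$ is the \emph{full} normalizer of the Cartan. This is not ``classical'': for a fixed model $E_0/F_0$ the image is only known (and only true in general) to have index dividing a bounded quantity related to $\#\OO^{\times}$ in the normalizer, and the index can be nontrivial for all $N$ in a fixed congruence class, depending on the twist. Your fallback --- ``bounded index still gives a positive limsup'' --- does not automatically repair this, because the whole point of the construction is the condition $K\not\subset F$: if the actual image $G$ is a proper subgroup that does not contain an antilinear element fixing your chosen $P=(1,1)e_0$, then $\mathrm{Stab}_G(P)$ is trivial, $F_0(P)=L\supset K$, and the example contributes to $T^{\bullet}$ rather than $T^{\circ}$. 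What is needed is an antilinear \emph{involution} in the actual image $G$ (equivalently an antilinear $\sigma$ with $\mu_\sigma$ of norm $1$), together with a point of order $N$ it fixes. Such an involution does exist --- complex conjugation for an embedding extending $F_0=\Q(j)\subset\R$ --- and this is exactly how the paper proceeds: $E_0$ has a real model, $E_0(\R)\cong S^1$ or $S^1\times\Z/2\Z$ contains a point $P$ of every order $N$, and then $F_0(P)\subset\R$ is automatically disjoint from $K$ and of degree at most $\#\Pic\OO\cdot\prod_i \ell_i^{2a_i-2}(\ell_i^2-1)$ over $\Q$. If you insert this observation (use a real point, or equivalently note that complex conjugation supplies the antilinear involution in $G$), your part~b) closes up and gives the same order of magnitude as the paper's Lemma~\ref{REAL1} argument.
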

\noindent
We will prove Theorem \ref{BIGTHM6} in $\S$5.
\\ \\
We can now easily deduce:

\begin{thm}
\label{BIGTHM2}
\label{thm:nonrational} We have $\displaystyle\limsup_{d\to\infty} \frac{T_{\rm CM}^{\circ}(d)}{d\log\log{d}} = 0.$
\end{thm}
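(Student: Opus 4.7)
The plan is to decompose $T_{\CM}^{\circ}(d)$ according to the CM field (split by discriminant size) and, within a fixed field, according to the conductor of the CM order, then show that every piece contributes $o(d\log\log d)$.

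Fix $\epsilon > 0$. First I would apply Theorem~\ref{BIGTHM3} to produce $\Delta_0 < 0$ so that, with $\K$ the set of imaginary quadratic fields having $\Delta_K < \Delta_0$, the limsup of $T_{\CM(\K)}^{\bullet}(d)/(d\log\log d)$ is less than $\epsilon$. The same argument giving (\ref{OBVIOUSEQ}), applied inside the restricted class $\CM(\K)$, yields $T_{\CM(\K)}^{\circ}(d) \leq T_{\CM(\K)}^{\bullet}(2d)$, whence
\[ \limsup_{d\to\infty} \frac{T_{\CM(\K)}^{\circ}(d)}{d\log\log d} \leq 2\epsilon. \]
The complement $\K^c$ (imaginary quadratic fields with $\Delta_K \geq \Delta_0$) is finite, so what remains is to bound $T_{\CM(K)}^{\circ}(d)$ for each individual $K \in \K^c$.

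For such a fixed $K$, I would introduce a threshold $M \in \Z^+$ on the conductor. For any elliptic curve $E_{/F}$ of $\circ$-type with $[F:\Q] = d$ and CM by an order $\OO \subset K$ of conductor $\ff$, passing to $FK$ (which contains $K$ and has degree $2d$ over $\Q$) gives
\[ \ff \cdot \#E(F)[\tors] \leq \ff \cdot \#E(FK)[\tors] \leq \ff T_{\CM(K)}^{\bullet}(2d). \]
For curves with $\ff \geq M$, dividing by $M$ and invoking Theorem~\ref{BIGTHM4} produces
\[ \limsup_{d\to\infty} \frac{1}{d\log\log d}\cdot\sup_{\substack{E/F,\ \circ\text{-type},\\ \CM \text{ by } K,\ \ff \geq M}} \#E(F)[\tors] \leq \frac{2 e^{\gamma}\pi}{M\sqrt{|\Delta_K|}}. \]
Curves with $\ff < M$ involve only finitely many orders $\OO$, and Theorem~\ref{BIGTHM6}(a) handles each: $T_{\OO\text{-}\CM}^{\circ}(d) \leq C(\OO)\sqrt{d\log\log d} = o(d\log\log d)$, and taking the maximum over this finite set preserves the $o(d\log\log d)$ bound. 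Hence $\limsup T_{\CM(K)}^{\circ}(d)/(d\log\log d) \leq 2e^{\gamma}\pi/(M\sqrt{|\Delta_K|})$ for every $M$, so this limsup is $0$.

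Combining the large-discriminant contribution with the finitely many $K \in \K^c$ yields $\limsup T_{\CM}^{\circ}(d)/(d\log\log d) \leq 2\epsilon$, and sending $\epsilon \to 0$ concludes. The main subtlety will be the uniformity over the infinite family of orders inside a fixed CM field: Theorem~\ref{BIGTHM6}(a) controls only one order at a time with a constant depending on $\OO$, while Theorem~\ref{BIGTHM4}'s weighting by $\ff$ packages the entire family into a single limsup whose $1/\ff$ behaviour is precisely what kills the large-conductor tail.
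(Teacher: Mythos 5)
Your proposal is correct and follows essentially the same route as the paper: use the base-change inequality $T^{\circ} \leq T^{\bullet}(2d)$ to transfer Theorem \ref{BIGTHM3} (eliminating all but finitely many CM fields) and Theorem \ref{BIGTHM4} (whose $\ff$-weighting eliminates all but finitely many conductors within each field), then apply Theorem \ref{BIGTHM6}(a) to each of the remaining finitely many orders. You have merely made explicit, via the threshold $M$, the finiteness reductions that the paper's Step 2 states in compressed form.
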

\begin{proof}
Step 1: Using (\ref{OBVIOUSEQ}) we immediately get versions of Theorems \ref{BIGTHM3} and \ref{BIGTHM4} for 
$T_{\CM}(d)$: since replacing $\epsilon$ by $2 \epsilon$ is harmless, 
Theorem \ref{BIGTHM3} holds verbatim with $T_{\CM}(d)$ in place of $T_{\CM}^{\bullet}(d)$, whereas for 
any imaginary quadratic field $K$ we have 
\begin{equation}
\label{BIGTHM2EQ1}
 \limsup_{d\to\infty} \frac{\mathfrak{f} T_{\CM(K)}(d)}{d\log\log{d}} \leq \frac{2 e^{\gamma} \pi}{\sqrt{|\Delta_K|}}. 
\end{equation}
Step 2: The above strengthened version of Theorem \ref{BIGTHM3} reduces us to finitely many quadratic fields, and then 
the dependence on the conductor in (\ref{BIGTHM2EQ1}) reduces us to finitely many quadratic orders.  Thus we may 
treat one quadratic order at a time, and Theorem \ref{BIGTHM6} gives a much better bound than $o(d \log \log d)$ in that case.
\end{proof}

\subsection{Proof of Theorem \ref{MAINTHM}} Theorem \ref{MAINTHM} is a quick consequence of these refined results: by Theorem \ref{BIGTHM2} 
we may restrict to the case in which the number field contains the CM field.  Now we argue much as in the proof of 
Theorem \ref{BIGTHM2}.  By Theorem \ref{BIGTHM5}, applied with $\OO$ the maximal order in $\Q(\sqrt{-3})$, 
\[ \limsup_{d\to\infty} \frac{T_{\CM}(d)}{d\log\log d} \ge \frac{e^{\gamma} \pi}{\sqrt{3}}.\]
By Theorem \ref{BIGTHM3}, in any sequence $\{(E_n)_{/F_n}\}$ with $[F_n:\Q] \ra \infty$ such that 
\[ \lim_{n \ra \infty} \frac{ \#E(F_n)[\tors]}{[F_n:\Q] \log \log [F_n:\Q]} \geq \frac{e^{\gamma} \pi}{\sqrt{3}}, \]
 only finitely many quadratic fields intervene, and by Theorem \ref{BIGTHM4} 
among orders with the same fraction field the conductors must be bounded.  So we have reduced to working with 
one imaginary quadratic order $\OO$ at a time, and Theorem \ref{BIGTHM5} tells us that $T_{\OO\text{-}\CM}^{\bullet}(d)$ 
is largest when the discriminant of $\OO$ is smallest, i.e., when $\OO$ is the ring of integers of $\Q(\sqrt{-3})$.  

\subsection{Complements} In $\S$6.1 we compare our results to the asymptotic behavior of prime order torsion studied in 
\cite{TORS1}.  In $\S$6.2 we address -- but do not completely resolve -- the question of the upper order of $T_{\CM}^{\circ}(d)$.  
In $\S$6.3 we revisit Breuer's work and give what is in a sense a non-CM analogue of Theorem \ref{BIGTHM5}: we study the asymptotic behavior of torsion one $j$-invariant at a time.


\section{Lower bounds on $\varphi_K(\aa)$}
\noindent
For the classical Euler totient function, it is a well-known consequence of Mertens' Theorem that \cite[Thm. 328]{HW} $$\liminf_{n\to\infty} \frac{\varphi(n)}{n/\log\log{n}} = e^{-\gamma}.$$ As in \cite{CP15}, we require analogous results for $\varphi_K(\aa)$, where $K$ is an imaginary quadratic field and $\aa$ is an ideal of $\OO_K$.

When the field $K$ is fixed, this presents no difficulty. In that case, one can argue precisely as in \cite{HW}, using the number field analogue of the classical Mertens Theorem: e.g. \cite{Rosen99}.  For any fixed number field $K$, let $\alpha_K$ be 
the residue of the Dedekind zeta function $\zeta_K(s)$ at $s = 1$.  Then one finds that
\[ \liminf_{|\aa|\to\infty} \frac{ \varphi_K(\aa)}{|\aa|/\log \log |\aa|} = e^{-\gamma} \alpha_K^{-1}. \]
(Here and below, $|\mathfrak{a}|$ denotes the norm of the ideal $\mathfrak{a}$.)
 When $K$ is imaginary quadratic, the class number formula  (e.g. \cite[Theorem 61, p. 284]{FT93}) gives \[\alpha_K = \frac{2\pi h_K}{w_K \sqrt{|\Delta_K|}}, \]
so that 
\begin{equation}\label{eq:landauK}
 \liminf_{|\aa|\to\infty} \frac{ \varphi_K(\aa)}{|\aa|/\log \log |\aa|} = \frac{e^{-\gamma} w_K \sqrt{|\Delta_K|}}{2\pi h_K}.\end{equation}
When $K$ is allowed to vary, the situation becoms more delicate.  In this section we will establish the following result.
\begin{thm}\label{thm:phibound}
There is a constant $c>0$ such that for all quadratic fields $K$ and all nonzero
ideals $\aa$ of $\OO_K$ with $|\aa| \geq 3$, we have
\[ \varphi_K(\aa) \geq \frac{c}{\log|\Delta_K|}\cdot \frac{|\aa|}{\log\log|\aa|}. \]
\end{thm}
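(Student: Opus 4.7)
The plan is to adapt the classical Landau--Mertens minimum-order argument for $\varphi(n)/n$ to ideals in $\OO_K$, while keeping careful uniform control on the $K$-dependence. Writing
\[ \frac{\varphi_K(\aa)}{|\aa|} = \prod_{\pp \mid \aa}\left(1 - \frac{1}{|\pp|}\right), \]
I would exploit monotonicity of $|\pp|\mapsto 1-1/|\pp|$: enumerating the primes of $\OO_K$ in increasing norm as $\pp_1,\pp_2,\ldots$ and setting $k=\omega_K(\aa)$, one has $\prod_{\pp\mid\aa}(1-1/|\pp|) \ge \prod_{i=1}^{k}(1-1/|\pp_i|) \ge \prod_{|\pp| \le y}(1-1/|\pp|)$ for any $y \ge |\pp_k|$, since the $i$-th smallest prime of $\OO_K$ dividing $\aa$ has norm at least $|\pp_i|$.

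To bound $|\pp_k|$ uniformly in $K$, I would use that each rational prime has at most two primes of $\OO_K$ above it, each of norm at least $p$; combined with $\prod_{i=1}^{k}|\pp_i|\le|\aa|$ and Chebyshev's classical bound in $\Z$, this gives an absolute constant $C_0$ with $|\pp_k| \le C_0\log|\aa|$ for $|\aa|\ge 3$. Taking $y = C_0\log|\aa|$, the problem reduces to establishing a uniform Mertens-type lower bound of the shape
\[ \prod_{|\pp|\le x}\left(1-\frac{1}{|\pp|}\right) \gg \frac{1}{\alpha_K \log x} \quad (x \ge 2), \]
with an absolute implied constant, valid over all imaginary quadratic $K$. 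The $\log x$ then produces the $\log\log|\aa|$ in the theorem. Combined with the classical bound $\alpha_K = L(1,\chi_K) \ll \log|\Delta_K|$ (equivalent, via the class number formula, to Landau's bound $h_K \ll \sqrt{|\Delta_K|}\,\log|\Delta_K|$), this yields the stated inequality.

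The main obstacle is the uniform Mertens estimate displayed above. For a fixed $K$ this is standard \cite{Rosen99}, but here one must trace the $K$-dependence of Landau's prime ideal theorem. I would approach this via partial summation starting from an estimate of the form $\sum_{|\pp|\le x}\log|\pp|/|\pp| = \log x + O(1)$ uniform in $K$; this in turn splits into two regimes, with large primes handled by an effective prime ideal theorem in $K$ and small primes handled by the elementary observation that at most two prime ideals of $\OO_K$ lie above each rational prime (so that the contribution from small primes is controlled by its rational analogue). The $\log|\Delta_K|$ factor in the theorem is the unavoidable cost of bounding $\alpha_K$ uniformly, and is the reason a bound of this exact shape — rather than a stronger one without the $\log|\Delta_K|$ — is what one should hope for.
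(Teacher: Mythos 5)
Your first step — writing $\varphi_K(\aa)/|\aa|=\prod_{\pp\mid\aa}(1-1/|\pp|)$, using monotonicity to replace the primes dividing $\aa$ by the primes of smallest norm, and reducing to a lower bound for $\prod_{|\pp|\le y}(1-1/|\pp|)$ with $y$ of size roughly a power of $\log|\aa|$ — is exactly the paper's opening move. (One small slip: because inert primes have norm $p^2$, the $k$-th smallest prime norm can be as large as a constant times $(\log|\aa|)^2$, not $\log|\aa|$; the paper takes $z=(C'\log|\aa|)^2$. This is harmless since only $\log y\asymp\log\log|\aa|$ enters.) The real problem is your key estimate. The displayed claim
\[ \prod_{|\pp|\le x}\Bigl(1-\frac{1}{|\pp|}\Bigr)\gg\frac{1}{\alpha_K\log x}\qquad(x\ge 2), \]
with an absolute implied constant, is false: the left side is at most $1$, while $\alpha_K=L(1,\leg{\Delta_K}{\cdot})$ is not bounded below by any positive absolute constant (it can be as small as $o(1)$ along sequences of discriminants), so for small $x$ the right side exceeds $1$. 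The asymptotic $\prod_{|\pp|\le x}(1-1/|\pp|)\sim e^{-\gamma}\alpha_K^{-1}(\log x)^{-1}$ holds only once $x$ is large compared with $|\Delta_K|$, and in the present application $x\asymp(\log|\aa|)^2$ is typically far smaller than $|\Delta_K|$. Your proposed route to it is likewise unavailable: $\sum_{|\pp|\le x}\log|\pp|/|\pp|=\log x+O(1)$ is not uniform in $K$ — the error genuinely involves $\sum_{p\le x}\leg{\Delta_K}{p}\log p/p$, whose control in the critical range runs into possible Landau--Siegel zeros, and unconditional effective prime ideal theorems lose at least $|\Delta_K|^{\epsilon}$, which is fatal when the theorem tolerates only a single factor of $\log|\Delta_K|$.

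What you actually need (and what the paper proves) is the bound on the partial Euler product itself, for \emph{all} $x\ge2$: factoring $\prod_{|\pp|\le z}(1-1/|\pp|)\ge\prod_{p\le z}(1-1/p)(1-\leg{\Delta_K}{p}/p)$, classical Mertens handles the first factor, and the second is bounded below by $c/\log|\Delta_K|$ by a purely elementary convolution argument (the paper's Lemmas \ref{lemma:sylvester} and \ref{lemma:lowerprod}): take $f$ multiplicative with $f(p^k)=1-\chi(p)$, so $f\star\chi=\mathbf{1}$ and $f\ge0$; comparing $\sum_{n\le x}(f\star\chi)(n)/n\gg\log x$ with the upper bound $\ll\log q\cdot\prod_{p\le x}(1+f(p)/(p-1))$ and rearranging gives $\prod_{p\le x}(1-\chi(p)/p)\gg1/\log q$ with no $L$-function analysis at all. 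This elementary step — essentially a quantitative form of the Mertens-style proof that $L(1,\chi)\ne0$ for real $\chi$ — is the missing idea; without it, or something equivalent, your argument cannot produce the stated $1/\log|\Delta_K|$ uniformity.
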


\noindent
 Below, $\star$ denotes Dirichlet convolution, so that $(f\star g)(n) = \sum_{de=n} f(d) g(e)$.

\begin{lemma}\label{lemma:sylvester} There is a positive constant $C$ for which the following holds. Let $f$ be a nonnegative multiplicative function. Let $\chi$ be a nonprincipal Dirichlet character modulo $q$. For all $x \geq 2$,
\begin{equation}
\label{SYLVESTEREQ}
 \left|\sum_{n \le x} \frac{(f\star\chi)(n)}{n}\right| \leq C\log{q} \cdot \prod_{p \le x} \left(1+\frac{f(p)}{p} + \frac{f(p^2)}{p^2} + \dots\right).
\end{equation}
\end{lemma}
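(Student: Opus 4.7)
The plan is to unfold the Dirichlet convolution, isolate an inner sum over $\chi(e)/e$ that can be controlled uniformly by $O(\log q)$, and then dominate the remaining sum over $d$ by the Euler product appearing on the right-hand side of \eqref{SYLVESTEREQ}.

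First, by interchanging the order of summation,
\begin{equation*}
\sum_{n \leq x} \frac{(f \star \chi)(n)}{n} \;=\; \sum_{d \leq x} \frac{f(d)}{d} \sum_{e \leq x/d} \frac{\chi(e)}{e}.
\end{equation*}
Thanks to this factored shape, it suffices to bound the two factors separately, with the bound on the inner sum being uniform in the cutoff $x/d$.

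The heart of the matter is the uniform estimate that, for every nonprincipal character $\chi$ modulo $q$ and every real $y \geq 1$,
\begin{equation*}
\left|\sum_{e \leq y} \frac{\chi(e)}{e}\right| \;\ll\; \log q.
\end{equation*}
To prove this, set $A(t) = \sum_{e \leq t} \chi(e)$. Then $|A(t)| \leq t$ trivially, while $|A(t)| \leq \varphi(q)$ for $t \geq q$, because $\chi$ is nonprincipal and $q$-periodic, so it sums to zero over any complete residue system. Abel summation rewrites the sum as $A(y)/y + \int_1^y A(t)/t^2 \, dt$; splitting the integral at $t = q$ yields an $O(\log q)$ contribution from $[1,q]$ and an $O(1)$ contribution from $[q,y]$, while the boundary term is bounded by $1$.

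For the outer sum, nonnegativity and multiplicativity of $f$, together with the observation that every $d \leq x$ has all its prime factors $\leq x$, give
\begin{equation*}
\sum_{d \leq x} \frac{f(d)}{d} \;\leq\; \prod_{p \leq x}\left(1 + \frac{f(p)}{p} + \frac{f(p^2)}{p^2} + \cdots\right),
\end{equation*}
since every term on the left already appears after expanding the product on the right. Multiplying these two bounds yields the lemma, with $C$ absorbing the implicit constant from the character-sum estimate. The only nontrivial ingredient is the uniform $O(\log q)$ bound, which is classical; no hyperbola-method refinement is needed here.
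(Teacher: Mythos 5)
Your proof is correct and follows essentially the same route as the paper's: unfold the convolution into $\sum_{d\le x}\frac{f(d)}{d}\sum_{e\le x/d}\frac{\chi(e)}{e}$, bound the inner character sum uniformly by $O(\log q)$ (trivial estimate up to $q$, periodicity/cancellation plus partial summation beyond $q$), and dominate the outer sum by the Euler product using nonnegativity and multiplicativity of $f$. The only cosmetic difference is that you run a single Abel summation with the integral split at $t=q$, whereas the paper splits the sum itself at $e=q$ before applying partial summation to the tail; the content is identical.
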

\begin{proof} Write $\sum_{n \le x} \frac{(f\star\chi)(n)}{n} = \sum_{d \le x} \frac{f(d)}{d} \sum_{e \le x/d} \frac{\chi(e)}{e}$. The contribution to the inner sum from values of $e\le q$ is bounded in absolute value by $1+1/2+\dots+1/q\ll \log{q}$. Since the partial sums of $\chi$ are (crudely) bounded by $q$, Abel summation shows that the contribution to the inner sum from values of $e$ with $q < e \le x/d$ is $\ll 1 \ll \log{q}$. Now the triangle inequality and the nonnegativity of $f$ yield
\[ \left|\sum_{n \le x} \frac{(f\star\chi)(n)}{n}\right| \ll \log{q} \cdot \sum_{d \le x} \frac{f(d)}{d}. \]
The sum on $d$ is bounded by the Euler product appearing in (\ref{SYLVESTEREQ}).
\end{proof}

\begin{lemma}\label{lemma:lowerprod} There is a constant $c>0$ for which the following holds. Let $\chi$ be a quadratic character modulo $q$. For all $x \ge 2$,
\[ \prod_{p \le x}\left(1-\frac{\chi(p)}{p}\right) \geq \frac{c}{\log{q}}. \]
\end{lemma}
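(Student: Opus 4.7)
The plan is to split into two regimes based on the size of $x$ relative to $q$, handling the small-$x$ case with a trivial Mertens bound and the large-$x$ case by means of Lemma \ref{lemma:sylvester}.

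Since $\chi$ is a real character taking values in $\{-1,0,1\}$, each Euler factor satisfies $1 - \chi(p)/p \geq 1 - 1/p > 0$. Hence Mertens' theorem gives
\[
\prod_{p \leq x}\left(1 - \frac{\chi(p)}{p}\right) \geq \prod_{p \leq x}\left(1 - \frac{1}{p}\right) \gg \frac{1}{\log x},
\]
which already proves the lemma in the range $2 \leq x \leq q^A$ for any fixed constant $A$, with $c$ depending only on $A$.

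For the complementary range $x > q^A$, the plan is to apply Lemma \ref{lemma:sylvester} with $f \equiv 1$, using crucially that for a real character $\chi$ the convolution $1\star\chi$ is pointwise nonnegative. This allows the absolute value in \eqref{SYLVESTEREQ} to be dropped, yielding the uniform upper bound
\[
\sum_{n \leq y}\frac{(1\star\chi)(n)}{n} \leq C\log q \cdot \prod_{p \leq y}\left(1 - \frac{1}{p}\right)^{-1} \ll \log q \cdot \log y \qquad (y \geq 2).
\]
Independently, a Dirichlet hyperbola evaluation of the same sum --- combined with P\'olya--Vinogradov to control the partial sums of $\chi$ --- yields an asymptotic of the shape $L(1,\chi)\log y + O(L(1,\chi)) + O(q^{1/2}y^{-1/2}\log q)$. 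Comparing the two expressions at $y = q^B$ with $B$ large simultaneously delivers the classical bound $L(1,\chi) \ll \log q$ and enough quantitative control on partial prime sums to conclude, via the factorization
\[
\prod_{p \leq x}\left(1-\frac{\chi(p)}{p}\right)^{-1} = L(1,\chi) \cdot \prod_{p > x}\left(1-\frac{\chi(p)}{p}\right),
\]
that the partial product is $\ll \log q$ throughout the large-$x$ regime as well.

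The main obstacle is ensuring every implicit constant is \emph{absolute} (independent of $q$ and $\chi$); this is exactly why Lemma \ref{lemma:sylvester} is the correct tool. It furnishes a uniform $\log q$-type bound via an elementary convolution identity, avoiding any reliance on zero-free regions or Siegel's theorem, which would introduce either ineffective constants or unwanted dependence on an exceptional real zero of $L(s,\chi)$. A secondary technical subtlety lies in the passage from the divisor-type sum $\sum_n (1\star\chi)(n)/n$ to the partial Euler product for $L(s,\chi)$: this requires a careful partial summation on primes (effectively inverting the convolution with $\mathbf{1}$) done in such a way as not to lose factors that would spoil the sharp order $1/\log q$ claimed by the lemma.
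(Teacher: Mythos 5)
Your small-$x$ regime ($2\le x\le q^{A}$) is correct, but the large-$x$ regime has a genuine gap, located exactly where you defer to ``a careful partial summation on primes.'' Taking $f\equiv\mathbf{1}$ in Lemma \ref{lemma:sylvester} gives an \emph{upper} bound for the nonnegative divisor sum $\sum_{n\le y}(\mathbf{1}\star\chi)(n)/n$; combined with the hyperbola method this does yield $L(1,\chi)\ll\log q$, but that is strictly weaker than the lemma. Your factorization
\[
\prod_{p\le x}\Bigl(1-\frac{\chi(p)}{p}\Bigr)^{-1}=L(1,\chi)\cdot\prod_{p>x}\Bigl(1-\frac{\chi(p)}{p}\Bigr)
\]
then requires the tail bound $\prod_{p>x}(1-\chi(p)/p)\ll1$, i.e.\ $\sum_{p>x}\chi(p)/p\ge -O(1)$ uniformly for $x\ge q^{A}$. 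Nothing in your outline proves this, and it is not accessible to the tools you list: given $L(1,\chi)\ll\log q$, the tail bound is \emph{equivalent} to the partial-product bound you are trying to establish, so the reduction is effectively circular. The standard elementary route (hyperbola plus P\'olya--Vinogradov applied to $\sum_{n\le x}\chi(n)\log n/n=\sum_{d\le x}\chi(d)\Lambda(d)d^{-1}\sum_{m\le x/d}\chi(m)/m$) only gives $\sum_{d\le x}\chi(d)\Lambda(d)/d\ll\sqrt{q}\,(\log q)/L(1,\chi)$, which reintroduces the lower bound on $L(1,\chi)$ --- i.e.\ Siegel's theorem and the exceptional-zero issue --- that you explicitly set out to avoid. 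Unconditionally, the partial sums $\sum_{p\le x}\chi(p)/p$ are simply not known to be within $O(1)$ of their limit for $x$ as small as a fixed power of $q$.

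The repair is to make a different choice of $f$ in Lemma \ref{lemma:sylvester}: take $f$ multiplicative with $f(p^{k})=1-\chi(p)$ for every prime power $p^{k}$. Then $f\ge0$ because $\chi$ is real, and one checks $f\star\chi=\mathbf{1}$, so the left-hand side of \eqref{SYLVESTEREQ} is $\sum_{n\le x}1/n\gg\log x$, while the Euler product on the right equals $\prod_{p\le x}\bigl(1+\frac{1-\chi(p)}{p-1}\bigr)=\prod_{p\le x}(1-1/p)^{-1}(1-\chi(p)/p)\ll\log x\cdot\prod_{p\le x}(1-\chi(p)/p)$ by Mertens. Cancelling $\log x$ gives the lemma in one stroke, for all $x\ge2$, with no case split and no reference to $L(1,\chi)$ or to tails of the Euler product; this is the paper's argument.
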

\begin{proof} Let $f$ be multiplicative such that $f(p^k) = 1-\chi(p)$ for every prime power $p^k$. Since $\chi$ is quadratic, $f$ assumes only nonnegative values. Moreover we have
$(f \star \chi)(n) = 1$ for all $n \in \Z^+$, so that
\[ \sum_{n \le x} \frac{(f \star \chi)(n)}{n} = \sum_{n \le x}\frac{1}{n} \gg \log{x}. \] Hence, by Lemma \ref{lemma:sylvester} and Mertens' Theorem \cite[Thm. 429]{HW},
\begin{align*} \log{x} \ll (\log{q}) \cdot \prod_{p \le x} \left(1 + \frac{1-\chi(p)}{p-1}\right) &= (\log{q})\cdot \prod_{p \le x}\left(1-\frac{1}{p}\right)^{-1} \cdot \prod_{p \le x}\left(1-\frac{\chi(p)}{p}\right)
\\&\ll (\log{q}) (\log{x}) \cdot \prod_{p \le x}\left(1-\frac{\chi(p)}{p}\right). \end{align*}
Rearranging gives the lemma.
\end{proof}

\begin{proof}[Proof of Theorem \ref{thm:phibound}] We write $\varphi_K(\aa) = |\aa| \cdot \prod_{\pp \mid \aa} (1-1/|\pp|)$ and notice that the factors $1-1/|\pp|$ are increasing in $|\pp|$. So if $z\ge 2$ is such that $\prod_{|\pp| \le z} |\pp| \ge |\aa|$, then
\begin{equation}\label{eq:step0} \frac{\varphi_K(\aa)}{|\aa|} \ge \prod_{|\pp| \le z} \left(1-\frac{1}{|\pp|}\right). \end{equation}
We first establish a lower bound on the right-hand side, as a function of $z$, and then we prove the theorem by making a convenient choice of $z$.  We partition the prime ideals with $|\pp|\le z$ according to the splitting behavior of the rational prime $p$ lying below $\pp$. Noting that $p \le |\pp|$, Mertens' Theorem and Lemma \ref{lemma:lowerprod} yield
\begin{align} \prod_{|\pp| \le z} \left(1-\frac{1}{|\pp|}\right) &\ge \prod_{p \le z} \prod_{\pp \mid (p)} \left(1-\frac{1}{|\pp|}\right)  
= \prod_{p \le z} \left(1-\frac{1}{p}\right) \left(1-\frac{\leg{\Delta_K}{p}}{p}\right) \notag\\
&\gg (\log{z})^{-1} \prod_{p \le z} \left(1-\frac{\leg{\Delta_K}{p}}{p}\right) \gg (\log{z})^{-1} \cdot (\log{|\Delta_K|})^{-1}. \label{eq:step1}\end{align}
With $C'$ a large absolute constant to be described momentarily, we set
\begin{equation}\label{eq:step2} z=(C'\log |\aa|)^2.\end{equation}

Let us check that $\prod_{|\pp| \le z}|\pp| \ge |\aa|$ with this choice of $z$. In fact, the Prime Number Theorem guarantees that \[ \prod_{|\pp| \le z}|\pp| \ge \prod_{p \le z^{1/2}} p \ge \prod_{p \le C'\log |\aa|}p \ge |\aa|,\] provided that $C'$ was chosen appropriately.  Combining \eqref{eq:step0}, \eqref{eq:step1}, and \eqref{eq:step2} gives \[\varphi_K(\aa)\gg |\aa| \cdot (\log{z})^{-1} \cdot (\log |\Delta_K|)^{-1}\gg (\log|\Delta_K|)^{-1} \cdot |\aa| \cdot (\log\log|\aa|)^{-1}. \qedhere \]
\end{proof}

\section{Proof of Theorem \ref{BIGTHM3}}
\noindent
Let $E$ be an elliptic curve over a degree $d$ number field $F$.  Certainly we may, and shall, assume that $\# E(F)[{\rm tors}] \geq 3$.  
Suppose that $E$ has CM by the imaginary quadratic field $K$ and that $F \supset K$.   Let $\OO = \End E$, an order in the imaginary quadratic field $K$.  Let $\OO_K$ be the 
ring of integers of $K$.  By \cite[Thm. 1.3]{BC17} there is an elliptic curve $E'_{/F}$ with $\End E' = \OO_K$ such that 
\[ \# E(F)[\tors] \mid \# E'(F)[\tors].  \]
 Let $\aa \subset \OO_K$
be the annihilator ideal of the $\OO_K$-module $E'(F)[{\rm tors}]$.  By \cite[Thm. 2.7]{BC17} we have 
\[ E'(F)[{\rm tors}] \cong \OO_K/\aa, \]
so
\[ \#E'(F)[{\rm tors}] = |\aa|. \]
 By the First Main Theorem of Complex Multiplication \cite[Thm. II.5.6]{SilvermanII} we have 
\begin{equation}
\label{PETEEQ1}
F \supset K^{\aa},
\end{equation}
while by \cite[Lemma 2.11]{BC17} we have 
\begin{equation}
\label{PETEEQ2}
  \frac{2 \varphi_K(\aa) h_K}{w_K} \mid [K^{\aa}:\Q] 
\end{equation}
with equality in most cases (e.g. unless $\aa \mid 6 \OO_K$).  Combining (\ref{PETEEQ1}) and (\ref{PETEEQ2}), we get
\begin{equation*}
 \varphi_K(\aa) \mid \frac{w_K}{2} \cdot \frac{d}{h_K}. 
\end{equation*}
Combining the last inequality with the result of Theorem \ref{thm:phibound}, we get
\[ |\aa|/\log\log|\aa| \le \frac{w_K}{2c} \cdot \frac{\log|\Delta_K|}{h_K} \cdot d. \]
Siegel's Theorem (see \cite[Chapter 21]{Davenport00}) implies that $h_K \gg |\Delta_K|^{1/3}$.  So we can choose $\Delta_0$ sufficiently large and negative so that when $\Delta_K < \Delta_0$, we have
\[ \frac{\log|\Delta_K|}{h_K} \le \frac{\epsilon}{6} c. \]
Working under this assumption on $\Delta_K$,  we have
\[ |\aa|/\log\log|\aa| \le \frac{\epsilon}{2} d. \]
For all $d$ sufficiently large in terms of $\epsilon$, this implies that
\[ \#E(F)[\tors] \leq \#E'(F)[\tors] = |\aa| < \epsilon d\log\log{d}.\]
Thus,
\begin{equation*}
 \limsup_{d \ra \infty} \frac{T_{\CM(\K)}^\bullet(d)}{d \log \log d} < \epsilon. 
\end{equation*}



\section{Proofs of Theorems \ref{BIGTHM4} and \ref{BIGTHM5}}
\noindent
In this section we fix an imaginary quadratic field $K$.

\subsection{Proof of Theorem \ref{BIGTHM4}}
Let $F \supset K$ be a number field of degree $d$, and let $E_{/F}$ be an elliptic curve with CM by an order 
$\OO$, of conductor $\ff$, in $K$.  Put $w = \# \OO^{\times}$.  We may and shall assume that $\# E(F)[\tors] \geq 5$.  We may write 
\[ E(F)[\tors] \cong \Z/a\Z \times \Z/ab \Z \]
for $a,b \in \Z^+$.  Since $\# E(F)[\tors] \geq 5$, we have $ab \geq 3$.   By \cite[Thm. 7]{CP15} there is a number field $L \supset F$ such that $F(E[ab]) \subset L$ 
and $[L:F] \leq b$.  Let $\mathfrak{h}\colon E \ra E/\Aut(E)$ be the Weber function for $E$, and put
\[ W(N,\OO) = K(\ff)(\mathfrak{h}(E[N])). \]
By \cite[Thm. 2.12]{BC17} we have $L \supset W(ab,\OO)$ and thus 
\begin{equation}
\label{THM4EQ1}
[L:\Q] \geq [W(ab,\OO):K^{(1)}][K^{(1)}:\Q] = 2h_K [W(ab,\OO):K(\ff)][K(\ff):K^{(1)}]. 
\end{equation}
We recall several formulas from \cite{BC17}: namely, we have \cite[Thm.4.8]{BC17}
\begin{equation}
\label{THM4EQ2}
 [W(ab,\OO):K(\ff)] = \frac{ (\OO/ab\OO)^{\times}}{w}
\end{equation}
and \cite[Lemma 2.3]{BC17}
\begin{equation}
\label{THM4EQ3}
 \# (\OO/ab\OO)^{\times} [K(\ff):K^{(1)}] = \varphi_K(ab\ff) \left( \frac{\varphi(N)}{\varphi(N \ff)} \right) \frac{w}{w_K}. \end{equation}
Combining (\ref{THM4EQ1}), (\ref{THM4EQ2}) and (\ref{THM4EQ3}) gives 
\[ [L:\Q] \geq \frac{2h_K}{w_K} \left( \frac{\varphi(ab)}{\varphi(ab\ff)} \right) \varphi_K(ab \ff), \]
and thus 
\[ d = [F:\Q] \geq \frac{[L:\Q]}{b} \geq \frac{2h_K}{b w_K} \left( \frac{\varphi(ab)}{\varphi(ab\ff)} \right) \varphi_K(ab \ff). \]
Multiplying by $a^2b^2 \ff^2 = |ab\ff \OO_K|$ and rearranging, we get 
\begin{equation}
\label{THM4EQ4}
\ff^2 \# E(F)[\tors] = \ff^2 a^2 b \leq \frac{w_K}{2} \frac{d}{h_K}  \frac{|abf \OO_K|}{\varphi_K(ab\ff)} 
 \frac{\varphi(ab\ff)}{\varphi(ab)}.
\end{equation}
We have $\frac{\varphi(ab\ff)}{\varphi(ab)} \leq \ff$; replacing the factor of $\frac{\varphi(ab\ff)}{\varphi(ab)}$ with $\ff$ 
in the right-hand side of (\ref{THM4EQ4}) and cancelling the $\ff$'s, we get 
\[ \ff \# E(F)[\tors] \leq \frac{w_K}{2} \frac{d}{h_K} \frac{|ab\ff\OO_K|}{\varphi_K(ab\ff)}. \]

Now using (\ref{eq:landauK}) we get that, as $\ff \# E(F)[\tors] \ra \infty$, 
\[ \ff \# E(F)[\tors] \leq (1+o(1)) \frac{e^{\gamma} \pi}{\sqrt{|\Delta_K|}} d \log \log(a^2 b^2 \ff^2). \]
Since $\ff \#E(F)[\tors] = \ff a^2 b \le a^2 b^2 \ff^2 \le (\ff \#E(F)[\tors])^2$,
\[ \log\log(a^2 b^2 \ff^2) = \log\log(\ff\#E(F)[\tors])  + O(1), \]
so that  
\[ \ff \# E(F)[\tors] \le (1 + o(1)) \frac{e^{\gamma} \pi}{\sqrt{|\Delta_K|}} d \log \log(\ff \# E(F)[\tors]). \]
Thus, 
\[ \frac{\ff \# E(F)[\tors]}{\log\log \ff \#E(F)[\tors]} \le (1 + o(1)) \frac{e^{\gamma} \pi}{\sqrt{|\Delta_K|}} d, \]
which implies that
\[ \ff \# E(F)[\tors] \le (1+o(1)) \frac{e^{\gamma} \pi}{\sqrt{|\Delta_K|}} d\log\log{d}. \]
Thus, for any sequence of $K$-CM elliptic curves $E_{/F}$ (having $F\supset K$, and $[F:\Q]=d$) with $\ff \#E(F)[\tors]\to\infty$,
\[ \limsup \frac{\ff \# E(F)[\tors]}{d \log \log d} \leq \frac{e^{\gamma} \pi}{\sqrt{|\Delta_K|}}.\]
Theorem \ref{BIGTHM4} follows immediately.

\subsection{Proof of Theorem \ref{BIGTHM5}} Let $\OO$ be the order of conductor $\ff$ in the imaginary 
quadratic field $K$.  Again we put $w = \# \OO^{\times}$.
\\ \\
The inequality 
\[ \limsup_{d \ra \infty} \frac{T_{\OO-\CM}^{\bullet}(d)}{d \log \log d} \leq \frac{e^{\gamma} \pi}{\ff \sqrt{|\Delta_K|}} \]
is immediate from Theorem \ref{BIGTHM4}, so it remains to prove the opposite inequality.
\\ \indent
Let $n \geq 2$, and let $N_n$ be the product of the primes not exceeding $n$.  We assume that $n$ is large enough so that for all primes $\ell$, 
if $\ell \mid \ff$ then $\ell \mid N_n$.  By \cite[Thm. 1.1b)]{BC17} there is a number field $F \supset K$ and an $\OO$-CM elliptic curve $E_{/F}$ such that \[[F:K(j(E))] = 
\frac{ \# (\OO/N_n\OO)^{\times}}{w} \]
and $(\Z/N_n\Z)^2 \hookrightarrow E(F)$.  Since $K(j(E)) = K(\ff)$, using (\ref{THM4EQ3}) as above shows that
\[ [F:\Q] = \frac{2 h_K}{w_K} \left( \frac{\varphi(N_n)}{\varphi(N_n\ff)} \right) \varphi_K(N_n\ff). \]
Because every prime dividing $\ff$ also divides $N_n$, we have $ \frac{\varphi(N_n)}{\varphi(N_n\ff)} = \frac{1}{\ff}$, 
so
\[ d = [F:\Q] = \frac{2h_K}{\ff w_K} \varphi_K(N_n \ff) = \frac{2 \ff h_K N_n^2}{w_K} \prod_{p \leq n} \bigg(1-\frac{1}{p}\bigg)\bigg(1- \leg{\Delta_K}{p} \frac{1}{p} \bigg). \]
It follows that 
\[ \# E(F)[\tors] \geq N_n^2 = \frac{w_K}{2 \ff h_K} d \prod_{p \leq n} \bigg(1-\frac{1}{p} \bigg)^{-1} \prod_{p \leq n}
\bigg(1-\leg{\Delta_K}{p}  \frac{1}{p} \bigg)^{-1}. \]
Mertens' Theorem gives $\prod_{p \le n}(1-1/p)^{-1} \sim e^{\gamma}\log{n}$, as $n\to\infty$, while 
\[\prod_{p \le n}\left(1-\leg{\Delta_K}{p}\frac{1}{p}\right)^{-1} \to L(1,\leg{\Delta_K}{\cdot}) = \frac{2 \pi h_K}{w_K \sqrt{|\Delta_K|}}. \]
Thus we find that as $n \ra \infty$ we have
\[ \# E(F)[\tors] \geq (1 + o(1)) \frac{e^{\gamma} \pi}{\ff \sqrt{|\Delta_K|}} d \log n. \]
Moreover, for sufficiently large $n$ we have
\[ d = \frac{2 h_K}{\ff w_K} \varphi_K(N_n \ff) = \frac{2 \ff h_K}{w_K} \varphi_K(N_n) \leq \frac{2 \ff h_K}{w_K} 
N_n^2 \leq N_n^3, \]
so as $n \ra \infty$ we have 
\[ \log \log d \leq \log \log N_n + O(1). \]
By the Prime Number Theorem we have $N_n = e^{(1+o(1))n}$, so 
\[ \log \log N_n \sim \log n, \]
and thus as $n \ra \infty$ we have
\[ \log n \geq (1+o(1)) \log \log d. \]
We conclude that as $n \ra \infty$, 
\[ \# E(F)[\tors] \geq (1+o(1)) \frac{e^{\gamma} \pi}{\ff \sqrt{|\Delta_K|}} d \log \log d = (1+o(1))
\frac{e^{\gamma} \pi}{\sqrt{|\Delta|}} d \log \log d, \]
completing the proof of Theorem \ref{BIGTHM5}.

\section{Proof of Theorem \ref{BIGTHM6}}

\subsection{Proof of Theorem \ref{BIGTHM6}a)}
Let $\OO$ be an order in the imaginary quadratic field $K$.  
By \cite[Prop. 25]{TORS1} there is a cyclic, degree $\ff$ $F$-rational isogeny $E \ra E'$, with $E'_{/F}$ an $\OO_K$-CM elliptic curve.  It follows that 
\begin{equation}
\label{JUSTKIDDINGEQ0}
\#E(F)[\tors] \leq \ff \# E'(F)[\tors].
\end{equation}
By \cite[Lemma 3.15]{BCS16} we have
\[ E'(F)[\tors] \cong \Z/a\Z \times \Z/ab \Z \]
with $a \in \{1,2\}$.  Certainly there are $A,B \in \Z^+$ such that
\[ E'(FK)[\tors] \cong \Z/A\Z \times \Z/AB\Z.\]
Write $ab = c_1 c_2$ with $c_1$ divisible only by primes $p \nmid \Delta_K$ and $c_2$ divisible only by 
primes $p \mid \Delta_K$.  Then \cite[Thm. 4.8]{BCS16} gives $c_1 \mid A$.  Let $\beta$ be the product of 
the distinct prime divisors of $c_2$, and let $\mathfrak{b}$ be the product of the distinct prime divisors of 
$\Delta_K$, so $\beta \mid \mathfrak{b}$.  By \cite[$\S$6.3]{BC17} we have $\frac{c_2}{\beta} \mid A$.  Since $\gcd(c_1,c_2) = 1$, this implies $\frac{ab}{\beta} \mid A$ and thus
\begin{equation}
\label{JUSTKIDDINGEQ2}
 \# E'(FK)[\tors] = A^2 B \geq A^2 \geq \frac{a^2 b^2}{\beta^2}. 
\end{equation}
Using (\ref{JUSTKIDDINGEQ0}) and (\ref{JUSTKIDDINGEQ2}) we get
\[ \#E(F)[\tors] \leq \ff \# E'(F)[\tors]  \leq a \ff \beta \sqrt{ \# E'(FK)[\tors]} \leq 2 \ff \mathfrak{b} \sqrt{ \# E'(FK)[\tors]}.  \]
Note that $\ff$ and $\mathfrak{b}$ depend only on $\OO$. Moreover $\# E'(FK)[\tors] \ll_{\OO} d\log\log d$, by Theorem \ref{BIGTHM5}. Thus, as claimed, we have
\[ \#E(F)[\tors] \ll_{\OO} \sqrt{d\log\log{d}}. \] 


\subsection{Proof of Theorem \ref{BIGTHM6}b)} First one rather innocuous preliminary result.

\begin{lemma}
\label{REAL1}
Let $F$ be a subfield of $\R$.  Let $E_{/F}$ be an elliptic curve, and let 
$N \in \Z^+$ have prime power decomposition $N= \prod_{i=1}^r \ell_i^{a_i}$.  Then there is a point $P \in E(\R)$ of order $N$ such that $[F(P):F] \leq \prod_{i=1}^r \ell_i^{2a_i-2}(\ell_i^2-1)$.
\end{lemma}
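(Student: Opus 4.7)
The plan is to show that \emph{every} $P \in E(\R)$ of exact order $N$ already satisfies the claimed bound, so that the lemma reduces to producing at least one such point. This splits the proof into an arithmetic step (bounding the degree of any point of exact order $N$) and a topological step (existence of a real $N$-torsion point), which are essentially independent.

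For the arithmetic step, I would argue by orbit-counting. For any $P \in E(\overline{F})$ of exact order $N$, the degree $[F(P):F]$ equals the size of the $\Gal(\overline{F}/F)$-orbit of $P$. Because the Galois action respects the group law, this orbit is contained in the set $S_N$ of points of exact order $N$ inside $E[N] \cong (\Z/N\Z)^2$. A routine Chinese Remainder Theorem computation yields
\[ \#S_N = \prod_{i=1}^r \bigl(\ell_i^{2a_i} - \ell_i^{2a_i-2}\bigr) = \prod_{i=1}^r \ell_i^{2a_i-2}(\ell_i^2-1), \]
which is exactly the bound in the lemma; so the degree inequality holds automatically once $P$ is chosen of order $N$. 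For the topological step, I would invoke the classical fact that $E(\R)$ is a compact abelian real Lie group of dimension one, so its identity component is isomorphic as a topological group to $\R/\Z$ and therefore contains an element of every positive integer order.

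I do not anticipate a substantive obstacle: the proof is an assembly of standard ingredients, and the bound in the lemma simply equals the cardinality of the full set of order-$N$ points in $E[N]$, so no finer arithmetic refinement is required. The only mild subtlety is remembering that the Galois orbit of $P$ need not lie in $E(\R)$ even though $P$ does; but this is irrelevant to the orbit-size bound, which uses nothing beyond containment of the orbit in $E[N]$.
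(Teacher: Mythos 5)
Your proposal is correct and follows essentially the same route as the paper's proof: produce a real point of order $N$ using the structure of $E(\R)$ as a one-dimensional compact abelian Lie group (the paper cites $E(\R)\cong S^1$ or $S^1\times\Z/2\Z$), then bound $[F(P):F]$ by the size of the Galois orbit, which lies in the set of exact order-$N$ points of $E[N]\cong(\Z/N\Z)^2$ of cardinality $\prod_{i=1}^r \ell_i^{2a_i-2}(\ell_i^2-1)$. Your closing remark that the orbit need not lie in $E(\R)$ is a correct and worthwhile observation that the paper leaves implicit.
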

\begin{proof}
As for every elliptic curve defined over $\R$, we have $E(\R) \cong S^1$ or $E(\R) \cong S^1 \times \Z/2\Z$ (e.g. \cite[Cor. V.2.3.1]{SilvermanII}) .  Thus there is $P \in E(\R)$ of order $N$.  Let $\overline{F}$ be the algebraic closure of $F$ viewed as a subfield of $\C$.  Then $P \in E(\overline{F})$ and 
the degree $[F(P):F]$ is the size of the $\Aut(\overline{F}/F)$-orbit on $P$.  For all $\sigma \in \Aut(\overline{F}/F)$, $\sigma(P)$ is also a point of order $N$, 
so the size of this orbit is no larger than the number of order $N = \prod_{i=1}^r \ell_i^{a_i}$ points in $E[N](\overline{F}) 
\cong (\Z/N\Z)^2$, which is $\prod_{i=1}^r \ell_i^{2a_i-2}(\ell_i^2-1)$.  
\end{proof}
\noindent
We now give the proof of Theorem \ref{BIGTHM6}b).  Let $\OO$ be an order in an imaginary quadratic field $K$.  Let 
$F_0 = \Q(j(\C/\OO))$, so that $F_0$ is a subfield of $\R$ (forcing $F_0\not\supset K$) and $[F_0:\Q]=\#\Pic\OO$.  Let $E_{/F_0}$ be any $\OO$-CM elliptic curve.  Let $r \in \Z^+$ and let $N_r = p_1 \cdots p_r$ be the product of the first $r$ primes. \\ \indent
Applying Lemma \ref{REAL1} to $E_{/F_0}$ we get a number field $F_{N_r} \subset \R$ with \[d_r \coloneqq [F_{N_r}:\Q] \leq \# \Pic \OO \prod_{i=1}^r (p_i^2-1)\]  such that $E(F_{N_r})$ has a point of order $N_r$.  So we have 
\[ \limsup_{r \ra \infty} \frac{d_r}{N_r^2} \leq \frac{\# \Pic \OO}{\zeta(2)} = \frac{6 \# \Pic \OO}{\pi^2} \]
and thus, as $r\to\infty$,
\[ \frac{\# E(F_{N_r})[\tors]}{\sqrt{d_r}} \geq \sqrt{\frac{\pi^2}{6\# \Pic \OO}} + o(1). \]

\section{Complements}
\noindent
For a number field $F$, let $\mfg_F = \Aut(\overline{\Q}/F)$ denote the absolute Galois group of $F$.  

\subsection{Comparison to prior work}
Fix an imaginary quadratic order $\OO$ of discriminant $\Delta$. For all sufficiently large primes $p$, the least degree of a number field $F \supset K$ such that there is an $\OO$-CM elliptic curve $E_{/F}$ with an $F$-rational point of order $p$ is at least $\left(\frac{2 \# \Pic \OO}{\# \OO^{\times}}\right) (p-1)$, with equality if $p$ splits in $\OO$, and thus the upper order of the size of a prime order torsion point divided by the degree of the number field containing $K$ over which it is defined is $\frac{\# \OO^{\times}}{2 \# \Pic \OO}$.  The maximum value of this quantity is $3$, occurring iff $\Delta = -3$; the next largest value 
is $2$, occurring iff $\Delta = -4$, and these are indeed the largest two imaginary quadratic discriminants.  But 
the next largest value is $1$, occurring iff $\Delta \in \{-7,-8,-11,-12,-16,-19,-27,-28, -43,-67,-163\}$.  In particular, both the 
class number $h_K$ and the size of the unit group $\OO^{\times}$ play a role in the asymptotic behavior of prime order 
torsion but get cancelled out by the special value $L(1,\left(\frac{\Delta_K}{\cdot}\right))$ 
when we look at the size of the torsion subgroup as a whole.

\subsection{The truth about $T_{\CM}^{\circ}(d)$?}

\begin{prop} There is a sequence of $d\to\infty$ along which $T^{\circ}_{\CM}(d) \ge d^{2/3+o(1)}$.
\end{prop}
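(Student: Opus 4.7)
My plan is to exhibit, for each prime $\ell > 3$ with $\ell \equiv 3 \pmod 4$, an explicit CM elliptic curve $E_{/F}$ with $F \not\supset K := \Q(\sqrt{-\ell})$, degree $d := [F:\Q] = h_K(\ell-1)$, and $\#E(F)[\tors] \geq \ell$. Combined with the asymptotic $h_K = \ell^{1/2+o(1)}$ as $\ell\to\infty$ (by Siegel's theorem together with the classical upper bound $h_K \ll \sqrt{|\Delta_K|}\log|\Delta_K|$ from the class number formula), this gives $d = \ell^{3/2+o(1)}$ and hence $\#E(F)[\tors] \geq \ell = d^{2/3+o(1)}$ along this sequence of $d$, yielding the proposition.

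The construction runs as follows. The prime $\ell$ ramifies in $\OO_K$: we have $\ell\OO_K = \mathfrak{l}^2$ where $\mathfrak{l} = (\sqrt{-\ell})$ is a principal self-conjugate prime ideal of norm $\ell$. Choose any elliptic curve $E$ with CM by $\OO_K$, defined over $F_0 := \Q(j(E))$ (of degree $h_K$ over $\Q$), and set
\[ F := F_0(E[\mathfrak{l}]). \]
By construction $E[\mathfrak{l}] \subseteq E(F)$, so $\#E(F)[\tors] \geq |E[\mathfrak{l}]| = \ell$; the two remaining tasks are to show $[F:F_0] = \ell - 1$ and $F \not\supset K$.

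For the degree: $\Aut(E[\mathfrak{l}]) \cong \F_\ell^\times$ since $E[\mathfrak{l}]$ is cyclic of order $\ell$, and by the main theorem of complex multiplication the Galois representation $\rho_\mathfrak{l}: G_{K(j(E))} \to \F_\ell^\times$ is surjective onto $(\OO_K/\mathfrak{l})^\times = \F_\ell^\times$. This is the ideal-theoretic version of the identity $[K(\ff)(E[N]):K(\ff)] = \#(\OO/N\OO)^\times$ invoked in the proof of Theorem~\ref{BIGTHM4}, applied with $N$ replaced by $\mathfrak{l}$. Since $G_{K(j(E))} \subseteq G_{F_0}$, the image of $\rho_\mathfrak{l}$ over $G_{F_0}$ is also $\F_\ell^\times$, giving $[F:F_0] = \ell - 1$. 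For the second task, if $K \subseteq F$ then $F$ would contain $K(j(E))(E[\mathfrak{l}])$, a field of degree $2(\ell-1)$ over $F_0$ --- contradicting $[F:F_0] = \ell - 1$. (This argument also implicitly uses that $\mathfrak{l}$ is self-conjugate, so $E[\mathfrak{l}]$ is stable under $G_{F_0}$ as a subgroup.)

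The main obstacle I anticipate is the rigorous verification of the surjectivity of $\rho_\mathfrak{l}|_{G_{K(j(E))}}$ onto $(\OO_K/\mathfrak{l})^\times$. The formulas from \cite{BC17} cited in the paper are written with integer torsion indices $N$, and adapting them to the ideal $\mathfrak{l}$ requires an ideal-theoretic form of Shimura reciprocity. This is standard (a direct analog of \cite[Thm.~2.12 and Thm.~4.8]{BC17}), but should be set up carefully for $\mathfrak{l}$ ramified. Once that is in hand, the class number asymptotics for $h_{\Q(\sqrt{-\ell})}$ complete the argument.
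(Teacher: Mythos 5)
Your overall strategy---producing torsion of order $\ell$ at the ramified prime of $K=\Q(\sqrt{-\ell})$ over a field of degree roughly $h_K\ell$, then using $h_K=\ell^{1/2+o(1)}$---is the same as the paper's, and your analytic endgame is fine. The genuine gap is the assertion that $\rho_{\mathfrak{l}}\colon G_{K(j)}\to(\OO_K/\mathfrak{l})^\times$ is surjective for \emph{any} $\OO_K$-CM model $E_{/F_0}$, on which both of your remaining claims rest. The First Main Theorem of CM does not give this: it identifies the \emph{Weber function} field $K(j)(\mathfrak{h}(E[\mathfrak{l}]))$ with the ray class field of conductor $\mathfrak{l}$, and the formula you cite from the proof of Theorem 1.4 is $[W(ab,\OO):K(\ff)]=\#(\OO/ab\OO)^{\times}/w$---note the Weber function and the division by $w$. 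So what you get unconditionally is surjectivity onto $(\OO_K/\mathfrak{l})^\times/\{\pm 1\}$, i.e.\ an image $H$ with $H\cdot\{\pm 1\}=\F_\ell^\times$. Since $\ell\equiv 3\pmod 4$, $-1$ is a nonsquare, and $H$ may be the index-two subgroup of squares; indeed the models of [BC, Thm.\ 1.1b)] used in the proof of Theorem 1.5 acquire full $\mathfrak{l}$-torsion over an extension of $K(j)$ of degree exactly $(\ell-1)/2$. The obstacle is therefore not ideal-theoretic bookkeeping, as your closing paragraph suggests, but genuine twist-dependence.

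This matters because your proof that $F\not\supset K$ is precisely the comparison $[F:F_0]=\ell-1<2(\ell-1)=[K(j)(E[\mathfrak{l}]):F_0]$. If instead $H$ is the squares while the image of the larger group $G_{F_0}$ is all of $(\Z/\ell\Z)^\times$, then $\ker\rho_{\mathfrak{l}}|_{G_{F_0}}\subseteq G_{K(j)}$, so $F=F_0(E[\mathfrak{l}])$ \emph{does} contain $K$ and the construction fails outright rather than merely losing a constant. The gap is repairable---one can choose the quadratic twist so that $-1$ lies in the image over $K(j)$, forcing $H=\F_\ell^\times$---but the paper avoids the issue entirely: it quotes [BCS, Cor.\ 7.5] to get a point of order $\ell$ over a field of degree exactly $d=h_K\cdot\frac{\ell-1}{2}$, and for $\ell\equiv 3\pmod 4$ this $d$ is \emph{odd} (genus theory gives $h_K$ odd for prime discriminant), so $F\not\supset K$ for parity reasons. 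Note that your $d=h_K(\ell-1)$ is always even, so no such parity argument is available for your field; you genuinely need the surjectivity, and as written you have not established it.
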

\begin{proof} Let $\ell$ be an odd prime. By Corollary 7.5 in [BCS] applied to the maximal order of $K=\Q(\sqrt{-\ell})$, there is a number field  $F$ of degree $d=h_{\Q(\sqrt{-\ell})} \frac{\ell-1}{2}$ and an $\OO_K$-CM elliptic curve $E_{/F}$ with an $F$-rational torsion point of order $\ell$. We restrict to $\ell\equiv 3\pmod{4}$ --- this has the effect of ensuring that $d$ is odd, and so $F\not\supset K$. Hence, $T_{\CM}^{\circ}(d) \ge \ell$. By Dirichlet's class number formula together with the elementary bound $L(1,\leg{-\ell}{\cdot}) \ll \log{\ell}$ (see, e.g.,  \cite[Thm. 8.18]{tenenbaum}), we have $h_{\Q(\sqrt{-\ell})} \ll \ell^{1/2}\log{\ell}$. Thus, $d \le \ell^{3/2+o(1)}$ (as $\ell\to\infty$), and so $$T_{\CM}(d) \ge \ell \ge d^{2/3+o(1)}.$$ As $\ell$ tends to infinity, so does $d$, and the proposition follows.
\end{proof}
\noindent
Define $T_{\CM,\mathrm{max}}^{\circ}(d)$ in the same way as $T_{\CM}^{\circ}(d)$, but with the added restriction that we consider only curves $E_{/F}$ with CM by the maximal order $\OO_K$. The preceding proof shows that $T_{\CM,\mathrm{max}}^{\circ}(d) \ge d^{2/3+o(1)}$ on a sequence of $d$ tending to infinity.

\begin{thm} 
\label{PAULSTHM}
For all $\epsilon > 0$, there is $C(\epsilon) > 0$ such that for all $d \in \Z^+$ we have 
\[T^{\circ}_{\CM,\mathrm{max}}(d) \leq C(\epsilon) d^{2/3+\epsilon}. \]
\end{thm}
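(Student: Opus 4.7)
The plan is to balance two upper bounds on $\#E(F)[\tors]$, optimizing over $|\Delta_K|$. Let $L = FK$, so $[L:\Q] = 2d$ and $E_{/L}$ is a $\OO_K$-CM elliptic curve over a field containing $K$.

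First I would establish a uniform-in-$K$ variant of Theorem~\ref{BIGTHM5}. The proof of Theorem~\ref{BIGTHM3} in \S3 already bounds $\#E(L)[\tors]$ by essentially $d\log|\Delta_K|\log\log\#E(L)[\tors]/h_K$ via Theorem~\ref{thm:phibound}. Inserting Siegel's (ineffective) bound $h_K\gg_\epsilon |\Delta_K|^{1/2-\epsilon}$ in place of the cube-root estimate invoked in \S3 yields
\[ \#E(F)[\tors] \leq \#E(L)[\tors] \leq C_\epsilon\, d\, |\Delta_K|^{-1/2+\epsilon}\log\log d. \]
Call this the first bound; it is decreasing in $|\Delta_K|$ and will handle the large-discriminant range.

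Next I would derive a competing bound that is \emph{increasing} in $|\Delta_K|$, exploiting that $F\not\supset K$. By \cite[Thm.~2.7]{BC17}, $E(L)[\tors] \cong \OO_K/\aa$ as a $\OO_K$-module. Since $E$ is defined over $F$, the nontrivial $\sigma \in \Gal(L/F)$ acts on $E(L)[\tors]$ by an $\OO_K$-semilinear involution; applying $\sigma$ to a cyclic generator forces $\overline{\aa} = \aa$. Writing $n \coloneqq \min(\aa\cap\Z^+)$ for the exponent of $\OO_K/\aa$ as abelian group, a prime-by-prime computation --- using $v_\pp(\aa) = v_{\overline{\pp}}(\aa)$ at split primes, a consequence of conjugation-stability --- gives
\[ \frac{n^2}{N(\aa)} = \prod_{\substack{\pp\text{ ramified}\\ v_\pp(\aa)\text{ odd}}} p_\pp, \]
which divides $\mathrm{rad}(|\Delta_K|)$. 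The exponent of $E(F)[\tors] \subseteq E(L)[\tors]$ divides $n$, and \cite[Lemma~3.15]{BCS16} gives $E(F)[\tors] \cong \Z/a \oplus \Z/ab$ with $a\in\{1,2\}$, whence $\#E(F)[\tors] \leq 2ab \leq 2n$. Combining with the first bound produces the second:
\[ \#E(F)[\tors] \leq 2\sqrt{|\Delta_K|\cdot \#E(L)[\tors]} \leq C_\epsilon\, |\Delta_K|^{1/4+\epsilon/2}\sqrt{d\log\log d}. \]

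The two bounds coincide at $|\Delta_K|\asymp d^{2/3}$, with common value $d^{2/3+O(\epsilon)}$, so their minimum is at most $C_\epsilon d^{2/3+O(\epsilon)}$ for every $K$. Since $\epsilon>0$ is arbitrary, absorbing the $O(\epsilon)$ slack into the exponent yields $\#E(F)[\tors] \leq C(\epsilon) d^{2/3+\epsilon}$.

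The main obstacle is the second bound: more precisely, the $\sqrt{|\Delta_K|}$ (rather than $|\Delta_K|$) under the square root. The argument of Theorem~\ref{BIGTHM6}(a) already delivers $\#E(F)[\tors] \ll \mathrm{rad}(|\Delta_K|)\sqrt{\#E(FK)[\tors]}$, but balancing that cruder inequality against the first bound only yields exponent $4/5$. The improvement to $\sqrt{|\Delta_K|}$ --- and thus to $2/3$ --- uses the full $\OO_K$-semilinear Galois structure on $E(L)[\tors]$, available precisely because the CM is not rationally defined over $F$.
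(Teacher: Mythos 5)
Your proof is correct, and its global architecture is the same as ours: two regimes balanced at $|\Delta_K|\asymp d^{2/3}$, with the large-discriminant range killed by Siegel's theorem applied to the torsion of $E$ over $FK$ (we actually only bound $\varphi(ab)$ there, via a point of order $ab$ and \cite[Thm.~5.3]{BC17}, but your route through the full torsion subgroup of $E(FK)$ works just as well), and the small-discriminant range killed by an inequality of the shape $\#E(F)[\tors]^2 \ll |\Delta_K|\cdot \#E(FK)[\tors]$. Where you genuinely differ is in how that last inequality is obtained. We write $ab=c_1c_2$ with $c_1$ prime to $\Delta_K$ and quote \cite[Thm.~4.8]{BCS16} (full $c_1$-torsion over $FK$) together with the structure of $\OO_K$-submodules generated by prime-power order points from \cite[\S 6.3]{BC17} to get $c_1^2c_2^2\mid \Delta_K\cdot\#E(FK)[\tors]$. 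You instead start from $E(FK)[\tors]\cong \OO_K/\aa$, observe that because $F\not\supset K$ the nontrivial element of $\Gal(FK/F)$ acts $\OO_K$-semilinearly and hence forces $\overline{\aa}=\aa$, and then compute locally that the exponent $n$ of $\OO_K/\aa$ satisfies $n^2/N(\aa)\mid \mathrm{rad}(|\Delta_K|)$, the only contributions coming from ramified primes with odd valuation; since $ab\mid n$ and $a\le 2$, this recovers the same inequality. Your derivation is self-contained modulo \cite[Thm.~2.7]{BC17} and \cite[Lemma~3.15]{BCS16}, and it isolates conceptually why ramified primes are the sole source of the $\sqrt{|\Delta_K|}$ loss; the tradeoff is that it leans on the hypothesis $F\not\supset K$ to get conjugation-stability of $\aa$, whereas the divisibilities we cite are proved once and for all in \cite{BCS16} and \cite{BC17}. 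Either way the two bounds meet at $|\Delta_K|\asymp d^{2/3}$ and give $T^{\circ}_{\CM,\mathrm{max}}(d)\ll_{\epsilon} d^{2/3+\epsilon}$ as required.
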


\begin{proof} 
Let $F \not \supset K$ be a number field of degree $d$, and let $E_{/F}$ be an $\OO_K$-CM elliptic curve.
There are positive integers $a$ and $b$ with
\[ E(F)[{\tors}] \cong \Z/a\Z \times \Z/ab\Z. \]
By [BCS, Lemma 3.15], we have $a \in \{1,2\}$. The remainder of the proof takes two forms depending on the size of $|\Delta_K|$. 

\medskip

\noindent Case I: $|\Delta_K| \ge d^{2/3}$. 

\medskip
\noindent Since $E(FK)$ contains a point of order $ab$, [BC, Theorem 5.3] shows that
\[ \varphi(ab) \le \frac{w_K}{2} \frac{2d}{h_K}. \]
By Siegel's Theorem, $h_K \gg_{\epsilon} |\Delta_K|^{1/2-\epsilon} \ge d^{1/3-2\epsilon/3}$ (as $d\to\infty)$, and so 
\[ \varphi(ab) \ll_{\epsilon} d^{2/3+2\epsilon/3}. \]
Consequently,
\[ \#E(F)[{\tors}] = a(ab) \le 2ab \ll_{\epsilon} d^{2/3+\epsilon}. \]

\medskip

\noindent Case II: $|\Delta_K| < d^{2/3}$. 

\medskip

\noindent We can and will assume that $d\ge 3$ and that $\#E(F)[{\rm tors}]\ge 3$. Write $ab = c_1 c_2$, where $(c_1,\Delta_K)=1$ and where every prime dividing $c_2$ divides $\Delta_K$. By [BCS, Theorem 4.8], $E(FK)$ has full $c_1$-torsion, so that $$ c_1^2 \mid \#E(FK)[\tors].$$ Let $\ell^{\alpha}$ be a prime power dividing $c_2$, and let $P$ be a point of $E(FK)$ of order $\ell^{\alpha}$. By [BC, \S6.3], the $\OO_K$-submodule of $E(FK)[{\rm tors}]$ generated by $P$ is isomorphic, as a $\Z$-module, to either $\Z/\ell^{\alpha}\Z \oplus \Z/\ell^{\alpha}\Z$ or $\Z/\ell^{\alpha}\Z \oplus \Z/\ell^{\alpha-1}\Z$. It follows that if $r$ is the product of the distinct primes dividing $c_2$, then
\[ c_2^2 \mid r\cdot\#E(FK)[{\rm tors}].\]
Since $\gcd(c_1,c_2)=1$ and $r \mid \Delta_K$, we have
\begin{equation}\label{eq:c1c2square} c_1^2 c_2^2 \mid \Delta_K \cdot \#E(FK)[\tors]. \end{equation}
Applying the proof of Theorem \ref{BIGTHM3} to $FK$, we get that if $\aa \subset \OO_K$ is the annihilator ideal of the $\OO_K$-module $E(FK)[{\rm tors}]$, then 
we have 
\[ |\aa| = \#E(FK)[\tors], \]
and 
\[ \varphi_K(\aa) \le \frac{w_K}{2} \cdot \frac{2d}{h_K}. \]
Hence, by Theorem \ref{thm:phibound},
\[ |\aa|/\log\log|\aa| \le \frac{w_K}{c} \cdot d \frac{\log |\Delta_K|}{h_K}.  \]
Since $\frac{w_K}{c}, \frac{\log |\Delta_K|}{h_K}$ are bounded, this implies $|\aa|/\log\log|\aa| \ll d$, and hence  $|\aa| \ll d\log\log{d}$. Hence, $\log\log|\aa| \ll \log\log{d}$, and
\[ \#E(FK)[{\rm tors}]= |\aa| \ll \frac{\log|\Delta_K|}{h_K}\cdot  d\log\log{d}. \]
Now \eqref{eq:c1c2square} implies that
\[ \#E(F)[{\rm tors}]^2 = (a^2 b)^2 = a^2 \cdot a^2 b^2 \le 4 c_1^2 c_2^2 \ll \frac{|\Delta_K| \log|\Delta_K|}{h_K}\cdot  d\log\log{d}. \]
By Siegel's Theorem,  $h_K \gg_{\epsilon} |\Delta_K|^{1/2-\epsilon}$. Thus (keeping in mind our upper bound on $|\Delta_K|$ in this case), 
\[ \frac{|\Delta_K| \log|\Delta_K|}{h_K} \ll_{\epsilon} |\Delta_K|^{1/2+2\epsilon} \le d^{1/3 + 4\epsilon/3},  \]
so that
\[ \#E(F)[{\rm tors}]^2 \ll_{\epsilon} d^{4/3+2\epsilon}. \]
Hence, \[ \#E(F)[{\tors}] \ll_{\epsilon} d^{2/3+\epsilon}.\] 
The result follows from combining Cases 1 and 2.
\end{proof}
\noindent
The above results suggest to us that the upper order of $T_{\CM}^{\circ}(d)$ is $d^{2/3+o(1)}$, but we cannot yet prove this.  When the CM is $F$-rationally defined, we were able to take advantage of the recent work \cite{BC17}.  The authors of \cite{BC17} are pursuing analogous algebraic results when the CM is not rationally defined.  In view of this, we hope to revisit the upper order of $T_{\CM}^{\circ}(d)$ later and present more definitive results.

\subsection{An analogue of Theorem \ref{BIGTHM5} in the non-CM case}
Our method of showing $\limsup_{d \ra \infty} \frac{ T_{\OO\text{-}\CM}^{\bullet}(d)}{d \log \log d} \geq \frac{e^{\gamma} \pi}{\sqrt{|\Delta|}}$ is very nearly the ``naive approach'' of starting with an $\OO$-CM elliptic curve defined over $F_0 = K(\mathfrak{f})$ 
and extending the base to $\tilde{F}_n = F_0(E[N_n])$.  In fact we pass to the Weber function field $F_n = F_0(\mathfrak{h}(E[N_n]))$ and then twist $E_{/F_n}$ to get full $N_n$-torsion.  We know the degree $[F_n:\Q]$ exactly; the degree $[\tilde{F}_n:\Q]$ depends on the $F_0$-rational model, but in general is $\# \OO^{\times}$ as large, so the naive approach would give \[\limsup_{d \ra \infty} \frac{ T_{\OO\text{-}\CM}^{\bullet}(d)}{d \log \log d} \geq \frac{e^{\gamma} \pi}{\# \OO^{\times} \sqrt{|\Delta|}}. \]
So the naive approach comes within a twist of giving the \emph{true upper order} of $T_{\CM}(d)$.  
\\ \\
Observe that in the CM case, fixing the quadratic order $\OO$ fixes the $\mfg_{\Q}$-conjugacy class 
of the $j$-invariant.  This motivates the following definition: let $j \in \overline{\Q} \subset \C$ and let $F_0 = \Q(j)$.  For 
positive integers $d$ divisible by $[F_0:\Q]$, put 
\[ T_j(d) = \sup \# E(F)[\tors], \]
the supremum ranging over number fields $F \subset \C$ with $[F:\Q] = d$ and elliptic curves $E_{/F}$ such that $j(E) = j$.   Note 
that we could equivalently range over all elliptic curves $E_{/F}$ such that $j(E)$ and $j$ are $\mfg_\Q$-conjugates.

\begin{thm}[Breuer \cite{Breuer10}]\mbox{ }
\label{BREUERTHM}
\begin{enumerate}
	\item[a)] If $j \in \overline{\Q}$ is a CM $j$-invariant, then 
\[ \limsup_d \frac{T_j(d)}{d \log \log d} \in (0,\infty). \]
\item[b)] If $j \in \overline{\Q}$ is not a CM $j$-invariant, then 
\[ \limsup_d \frac{T_j(d)}{\sqrt{d \log \log d}} \in (0,\infty). \]
\end{enumerate}
\end{thm}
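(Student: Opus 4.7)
The plan is to handle (a) and (b) in parallel: in each case, the lower bound comes from Breuer's \emph{naive approach} of passing to division fields $F_n = F_0(E_0[N_n])$ for a fixed model $E_0/F_0 = \Q(j)$ and $N_n = \prod_{p \le n} p$, while the upper bound comes from structural theorems on Galois representations --- the Clark--Pollack bound (\ref{CPEQ}) in the CM case and Serre's open image theorem in the non-CM case.

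For part (a), the upper bound is immediate, since $T_j(d) \le T_{\CM}(d) \ll d \log \log d$ by (\ref{CPEQ}). For the lower bound, I would fix an $\OO$-CM elliptic curve $E_0$ over $F_0 = \Q(j)$, base change to $F_0 K = K(\ff)$ (which at most doubles each degree, so does not affect the order of magnitude), and then run the computation of Theorem \ref{BIGTHM5} with $F_n = K(\ff)(E_0[N_n])$. That computation shows
\[ \# E_0(F_n)[\tors] \ge (1+o(1)) \frac{e^\gamma \pi}{\ff \sqrt{|\Delta_K|}}\, [F_n:\Q]\, \log \log [F_n:\Q], \]
so $\limsup_{d\to\infty} T_j(d)/(d\log\log d) > 0$, matching the upper bound up to a constant factor.

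For part (b), the lower bound is again the naive construction. For non-CM $j$, Serre's open image theorem supplies an inequality $[F_0(E_0[N]):F_0] \gg_j |\GL_2(\Z/N\Z)|$, so with $N = N_n$ and $d_n = [F_n:\Q]$ I get $d_n \asymp_j N_n^4 \prod_{p\le n}(1-1/p)(1-1/p^2) \asymp_j N_n^4/\log n$ while $\# E_0(F_n)[\tors] \ge N_n^2$; since $\log\log d_n \asymp \log n$ by the prime number theorem, the ratio $\#E_0(F_n)[\tors]/\sqrt{d_n \log\log d_n}$ stays bounded below. For the matching upper bound, let $E/F$ satisfy $j(E) = j$ and $[F:\Q] = d$, and write $E(F)[\tors] \cong \Z/a\Z \times \Z/ab\Z$. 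Since $j$ is non-CM we have $\Aut(E_0) = \{\pm 1\}$, so after passing to a quadratic extension $F'/F$ we may identify $E_{F'}$ with the base change $E_{0, F'}$, and hence $F'$ contains $F_0(E_0[a], P)$ for some point $P$ of order $ab$. A direct count --- the subgroup of $\GL_2(\Z/ab\Z)$ which both pins $E_0[a]$ pointwise and fixes $P$ has order at most $b^2$ --- combined with Serre gives
\[ [F_0(E_0[a], P) : F_0] \gg_j \frac{|\GL_2(\Z/ab\Z)|}{b^2} = a^4 b^2 \prod_{p\mid ab}\left(1-\tfrac{1}{p}\right)\!\left(1-\tfrac{1}{p^2}\right), \]
so $a^4 b^2 \ll_j d \cdot ab/\varphi(ab) \ll d\log\log(ab) \ll d \log\log d$, whence $\#E(F)[\tors] = a^2 b \ll_j \sqrt{d\log\log d}$.

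The hard part is the non-CM upper bound, and the entire content there is Serre's open image theorem supplying $[F_0(E_0[N]):F_0] \gg_j |\GL_2(\Z/N\Z)|$; without this one cannot rule out a non-CM $j$ whose division fields grow anomalously slowly. Apart from this input, everything is a routine subgroup count in $\GL_2(\Z/ab\Z)$ together with the Mertens-type estimate $n/\varphi(n) \ll \log\log n$ to absorb the Euler product into $\log\log d$.
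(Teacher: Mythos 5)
Your argument is correct, and it is essentially the route the paper takes: the paper itself only cites Breuer for this statement (together with the twisting remark you also use), but every ingredient you assemble --- the naive division-field construction and Mertens-type asymptotics for the lower bounds, the bound (\ref{CPEQ}) for the upper bound in (a), and a stabilizer count of size $b^2$ in $\GL_2(\Z/ab\Z)$ combined with an open-image input for the upper bound in (b) --- is exactly what is deployed in the proofs of Theorems \ref{BIGTHM5} and \ref{NONCMTHM}, with Serre's open image theorem standing in for the ``truly Serre'' hypothesis. The only cosmetic quibble is that in the lower bound of (b) you invoke Serre where only the trivial inequality $[F_0(E_0[N_n]):F_0]\le \#\GL_2(\Z/N_n\Z)$ is needed.
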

\noindent
Breuer states his results for a fixed elliptic curve $E_{/F_0}$, but an immediate twisting argument gives the result for fixed $j$.
\\ \indent
From this perspective our Theorem \ref{BIGTHM5} can be viewed as sharpening Theorem \ref{BREUERTHM}a) by computing the value of $\limsup_d \frac{T_j(d)}{d \log \log d}$ 
for every CM $j$-invariant.  We will now 
give an analogous sharpening of Theorem \ref{BREUERTHM}b).  For a non-CM elliptic curve $E$ defined over a number field $F$, we define the \textbf{reduced Galois representation}
\[ \overline{\rho}_N\colon \mfg_F \ra \GL_2(\Z/N\Z)/\{\pm 1\} \]
as the composite of the mod $N$ Galois representation $\rho_N\colon \mfg_F \ra \GL_2(\Z/N\Z)$ with the quotient map 
$\GL_2(\Z/N\Z) \ra \GL_2(\Z/N\Z)/\{ \pm 1\}$.  The point is that if $(E_1)_{/F}$ and ${(E_2)}_{/F}$ have $j(E_1) = j(E_2)$, 
then their reduced Galois representations are the same  (up to conjugacy in $\GL_2(\Z/N\Z)/\{\pm 1\}$).  We say that $j \in \overline{\Q}$ 
is \textbf{truly Serre} if for every $E_{/F_0}$ with $j(E) = j$, then $\overline{\rho}_N$ is surjective for all $N \in \Z^+$.  

\begin{thm}
\label{NONCMTHM}
Let $j \in \overline{\Q} \subset \C$ be a non-CM $j$-invariant, and let $F_0 = \Q(j)$. 
\begin{enumerate}
	\item[a)] We have \begin{equation}
\label{TWISTEDBREUER1}
 \limsup_{d \ra \infty} \frac{T_j(d)}{\sqrt{d \log \log d}} \geq \sqrt{\frac{\pi^2 e^{\gamma}}{3[F_0:\Q]}}. 
\end{equation}
\item[b)] If $j$ is truly Serre, then 
\begin{equation}
\label{TWISTEDBREUER2}
 \limsup_{d \ra \infty} \frac{T_j(d)}{\sqrt{d \log \log d}} = \sqrt{\frac{\pi^2 e^{\gamma}}{3[F_0:\Q]}}. 
\end{equation}
\end{enumerate}
\end{thm}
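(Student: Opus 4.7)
The plan for part (a) is to emulate the construction from the proof of Theorem~\ref{BIGTHM5}, replacing the CM-theoretic twist used there by one built from the reduced Galois representation. Fix a model $E'/F_0$ with $j(E')=j$, let $N_n$ be the product of the first $n$ primes, and set $L_n = F_0(\overline{\rho}'_{N_n})$. The restriction $\rho'_{N_n}|_{\mfg_{L_n}}$ takes values in $\{\pm I\}$ and so defines a quadratic character $\chi_n\colon \mfg_{L_n}\to\{\pm 1\}$; by Kummer theory this corresponds to some $\delta_n \in L_n^\ast/L_n^{\ast 2}$, and the quadratic twist $E_n := (E')^{\delta_n}_{L_n}$ satisfies $j(E_n)=j$ and has all $N_n$-torsion rational over $L_n$. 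Setting $d_n = [L_n:\Q]$ and using the crude estimate $[L_n:F_0] \leq \#\GL_2(\Z/N_n\Z)/2$ together with the standard Mertens asymptotics
\[ \prod_{p\leq x}(1-p^{-1})\sim \frac{e^{-\gamma}}{\log x}, \qquad \prod_{p}(1-p^{-2}) = \frac{6}{\pi^2}, \]
and $\log p_n \sim \log\log d_n$, the ratio $N_n^2/\sqrt{d_n\log\log d_n}$ tends to $\sqrt{\pi^2 e^{\gamma}/(3[F_0:\Q])}$, establishing~(\ref{TWISTEDBREUER1}).

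For part (b) the plan is to match (a) with an upper bound valid for every $E/F$ with $j(E)=j$. Fix such an $E$, write $E(F)[\tors]\cong \Z/a\Z\times\Z/ab\Z$, and set $M = a^2 b$, $d=[F:\Q]$. Since $j$ is non-CM (so in particular $j \neq 0, 1728$), $\Aut(E')=\{\pm 1\}$, and $E$ is the quadratic twist of $(E')_F$ by some character $\chi_\delta\colon \mfg_F\to\{\pm 1\}$; the Galois action on $E[N]$ is then $\chi_\delta\cdot\rho'_N$. The focus will be on the case $a\geq 3$; when $a\in\{1,2\}$ a direct cyclic-type argument produces the strictly stronger bound $M\ll\sqrt{d}$, which is negligible.

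The constraint $E[a]\subset E(F)$ translates to $\overline{\rho}'_a(\mfg_F)=\{1\}$, while an $F$-rational point $P$ of order $ab$ forces $\overline{\rho}'_{ab}(\mfg_F)$ to stabilize $\{\pm P\}$ inside $E'[ab]/\{\pm 1\}$. A direct matrix count fixing $P=(0,1)^T$ shows that the resulting constrained subgroup $S\subseteq \GL_2(\Z/ab\Z)/\{\pm I\}$ has order $|S|=b\varphi(ab)/\varphi(a)$. The ``truly Serre'' hypothesis supplies $\overline{\rho}'_{ab}(\mfg_{F_0})=\GL_2(\Z/ab\Z)/\{\pm I\}$, whence
\[ d \geq [F_0:\Q]\cdot\frac{\#\GL_2(\Z/ab\Z)}{2|S|} = \frac{[F_0:\Q]}{2}\,a^3 b^2\,\varphi(a)\prod_{p\mid ab}(1-p^{-2}). \]
Rearranging, applying $\prod_{p\mid ab}(1-p^{-2})\geq 6/\pi^2$, and combining with the Landau--Mertens bound $a/\varphi(a)\leq (1+o(1))e^{\gamma}\log\log a$ (together with the preliminary consequence $a\leq M\ll d$, so $\log\log a\leq(1+o(1))\log\log d$), delivers $M\leq(1+o(1))\sqrt{\pi^2 e^{\gamma}/(3[F_0:\Q])}\sqrt{d\log\log d}$, matching the lower bound from (a).

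The main delicate step is the combinatorial count of $|S|$; the formula above genuinely uses $a\geq 3$ so that the residues $\pm I \pmod a$ are distinct, and the degenerate cases $a\mid 2$ require separate but routine bookkeeping that yields the better square-root bound. A secondary point to check is that the character $\chi_n$ in part (a) corresponds, via Kummer theory, to a twist already defined over $L_n$ rather than over a further quadratic extension. Neither issue is substantive; the real content is the interlocking of the surjectivity furnished by ``truly Serre'' with the Mertens-type asymptotics, which together pin down the constant $\sqrt{\pi^2 e^{\gamma}/(3[F_0:\Q])}$ in both directions.
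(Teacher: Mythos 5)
Your proposal is correct. Part a) is essentially the paper's argument verbatim: pass to the field cut out by the reduced mod-$N_r$ representation, twist away the residual $\{\pm 1\}$, and run Mertens/PNT on $\#\GL_2(\Z/N_r\Z)$. Part b) reaches the same constant by the same overall skeleton (a degree lower bound of the shape $d \geq [F_0:\Q]\,\#\GL_2(\Z/ab\Z)/(2\cdot(\text{something} \leq b^2))$ via the truly Serre hypothesis, followed by $\prod_{p\mid\cdot}(1-p^{-2})^{-1}\le\pi^2/6$ and the Landau--Mertens bound), but your key step is genuinely different from the paper's. The paper first proves a field-theoretic lemma (the non-CM analogue of \cite[Thm.\ 7]{CP15}): an extension $L/F$ with $F(E[ab])\subseteq L$ and $[L:F]\le b^2$, built by adjoining torsion points prime by prime, and then uses $L \supseteq F_0(x(E[ab]))$. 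You instead bound the image $\overline{\rho}'_{ab}(\mfg_F)$ directly by the order of the stabilizer subgroup $S$, and your count $|S| = b\,\varphi(ab)/\varphi(a)$ is correct for $a\ge 3$ (one should note that the actual constraint links the sign in $M\equiv \pm I \pmod a$ with the sign in $MP=\pm P$, but for $a\ge 3$ the unlinked count coincides with the linked one, as you implicitly use). Since $|S| = b^2\prod_{p\mid ab,\,p\nmid a}(1-1/p)\le b^2$, your route recovers the paper's $b^2$ bound purely group-theoretically and in fact yields the marginally sharper intermediate inequality $\#E(F)[\tors]^2 \le \frac{\pi^2}{3[F_0:\Q]}\,\frac{a}{\varphi(a)}\,d$ (with $a/\varphi(a)$ in place of the paper's $ab/\varphi(ab)$), though this does not change the final constant. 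Your dispatching of $a\in\{1,2\}$ via the cyclic count $|H|=b\varphi(ab)$, giving $\#E(F)[\tors]\ll_j \sqrt{d}$, also checks out. The only cosmetic imprecision is in part a), where you say the ratio \emph{tends to} the constant; without assuming surjectivity you only get the needed one-sided bound $\liminf_n N_n^2/\sqrt{d_n\log\log d_n}\ge\sqrt{\pi^2 e^{\gamma}/(3[F_0:\Q])}$, which suffices for \eqref{TWISTEDBREUER1}.
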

\begin{proof}
a) Let ${(E_0)}_{/F_0}$ be an elliptic curve with $j(E_0) = j$.  Let $r \geq 2$, let $N_r$ be the product of all primes $p \leq r$, and let $F_{N_r} = F_0(x(E_0[N_r]))$.  
Then 
\[ d_r \coloneqq [F_{N_r}:\Q] \mid [F_0:\Q]\frac{ \# \GL_2(\Z/N_r\Z)}{2} = [F_0:\Q]\frac{\prod_{p \leq r} (p^2-1)(p^2-p)}{2}. \]
The mod $N_r$-Galois representation on $(E_0)_{/F_{N_r}}$ has image contained in $\{ \pm 1\}$ and thus is given by 
a quadratic character $\chi\colon \Aut(\overline{F_{N_r}}/F_{N_r}) \ra \{ \pm 1\}$.  Let $(E_{N_r})_{/F_{N_r}}$ be the twist of 
$(E_0)_{/F_{N_r}}$ by $\chi$, so that \[(\Z/N_r \Z)^2 \hookrightarrow E_{N_r}(F_{N_r}) \]
and thus 
\[ \# E_{N_r}(F_{N_r})[\tors] \geq N_r^2. \]
Arguments very similar to those made above -- using the Euler product for $\zeta(2)$, Mertens' Theorem and the Prime Number 
Theorem -- give 
\[ d_r \leq (1+ o(1)) \frac{3[F_0:\Q]}{\pi^2 e^{\gamma} \log r} N_r^4\]
and 
\[ \# E(F_{N_r})[\tors]^2 \geq N_r^4 \geq (1+o(1)) \frac{\pi^2 e^{\gamma}}{3[F_0:\Q]} d_r \log \log d_r, \]
and thus (\ref{TWISTEDBREUER1}) follows. \\
b) Let $E_{/F}$ be an elliptic curve with $j(E) = j$ and $[F:\Q] = d$.  We may and shall assume that $d\ge 3$ and that $\# E(F)[\tors] \geq 5$. Write 
\[ E(F)[\tors] \cong \Z/a\Z \times \Z/ab \Z \]
for $a,b \in \Z^+$, and note that we have $ab \geq 3$. 

\medskip

\noindent Step 1: We claim that there is a number field $L \supset F$ such that $F(E[ab]) \subset L$ and $[L:F] \leq b^2$.  This is the non-CM analogue of \cite[Thm. 7]{CP15}.  As in \emph{loc. cit.} primary decomposition and induction quickly reduce us to the consideration of the case $a = p^A$, $b = p$ for a prime number $p$ and $A \in \N$, and we must show that we have full $p^{A+1}$-torsion 
in an extension of degree at most $p^2$.   If $A = 0$, then we have an $F$-rational point $P$ of 
order $p$.  If $Q \in E[p] \setminus \langle P \rangle$, then the Galois orbit on $Q$ has size at most $p^2-p$, so we may 
take $L = F(Q)$ and get $[L:F] \leq p^2-p$.  If $A \geq 1$, then $E$ has full $p^A$-torsion defined over $F$ and an $F$-rational point 
$P$ of order $p^{A+1}$.  Let $Q \in 
E[p^{A+1}]$ be such that $\langle P,Q \rangle = E[p^{A+1}]$.  For $\sigma \in \mfg_F$, we have $p \sigma(Q) = \sigma(pQ) = 
pQ$, so there is $R_{\sigma} \in E[p]$ such that 
\[ \sigma(Q) = Q + R_{\sigma}. \]
Thus there are at most $p^2$ possibilities for $\sigma(Q)$, so again we may take $L = F(Q)$. 
\medskip

\noindent 
Step 2: Let $W(N) = F_0(x(E[N]))$.  Then $L \supset W(ab)$, and thus
\[ [L:\Q] \geq [W(ab):F_0][F_0:\Q] = [F_0:\Q] \frac{\# \GL_2(\Z/ab\Z)}{2}. \]
Hence,
\[ d = [F:\Q] \geq \frac{[L:\Q]}{b^2} \geq \frac{[F_0:\Q]}{2b^2} \# \GL_2(\Z/ab\Z). \]
Multiplying by $a^4b^4$ and rearranging, we get 
\[ \# E(F)[\tors]^2 = a^4 b^2 \leq  \frac{2 d}{[F_0:\Q]} \cdot \frac{(ab)^4}{\#\GL_2(\Z/ab\Z)}. \]
Now 
\begin{align*} \frac{(ab)^4}{\# \GL_2(\Z/ab\Z)} = \prod_{p \mid ab} \frac{p^4}{(p^2-1)(p^2-p)} &= \prod_{p \mid ab} \left(1-\frac{1}{p^2}\right)^{-1} \prod_{p \mid ab}\left(1-\frac{1}{p}\right)^{-1} \\ &\le \frac{\pi^2}{6} \prod_{p \mid ab}\left(1-\frac{1}{p}\right)^{-1}. \end{align*}
Substituting this above,
\begin{equation}\label{eq:a4b2bound} \#E(F)[\tors]^2 = 	a^4 b^2 \le \frac{\pi^2}{3 [F_0:\Q]} d \prod_{p\mid ab}\left(1-\frac{1}{p}\right)^{-1}. \end{equation}
Rearranging,\[ d \ge  \frac{3[F_0:\Q]}{\pi^2} \cdot a^4 b^2 \cdot \prod_{p \mid ab}(1-1/p) = \frac{3[F_0:\Q]}{\pi^2} a^3 b \cdot \varphi(ab) > \frac{1}{2} ab.   \]
(In the last step, we used the lower bound $\varphi(ab) \ge 2$.) The product  in \eqref{eq:a4b2bound} is only increased if $p$ is taken to run over the first $\omega$ primes, where $\omega$ is the number of distinct primes dividing $ab$. Since $ab < 2d$, the first $\omega$ primes all belong to the interval $[1,2\log{d}]$, once $d$ is large enough. Hence, as $d\to\infty$,
\[ \prod_{p\mid ab}\left(1-\frac{1}{p}\right)^{-1} \le \prod_{p \le 2\log{d}} \left(1-\frac{1}{p}\right)^{-1} = (1+o(1)) e^{\gamma} \log\log{d}. \]
Plugging this back into \eqref{eq:a4b2bound} and taking square roots yields the upper bound \[T_j(d) \le \left(\sqrt{\frac{\pi^2 e^{\gamma}}{3[F_0:\Q]}}+o(1)\right)\sqrt{d\log\log{d}}. \] Combining this with the lower bound from part a), the result follows.
\end{proof}

\begin{remark}\mbox{ }
\begin{enumerate}
	\item[a)] If $E_{/F_0}$ is an elliptic curve over a number field with surjective adelic Galois representation
$\hat{\rho}\colon \mfg_{F_0} \ra \GL_2(\widehat{\Z})$,
then $j(E_0)$ is truly Serre.   The converse also holds.  Indeed, suppose $j$ is truly Serre, and let $E_{/F_0}$ be any elliptic curve 
with $j(E) = j$, let $N \in \Z^+$, and let $\rho_N\colon \mfg_{F_0} \ra \GL_2(\Z/N\Z)$ be the mod $N$ Galois representation.  By definition 
of truly Serre, we have $\langle \rho_N(\mfg_{F_0}),-1\rangle = \GL_2(\Z/N\Z)$.   It follows \cite[p. 145]{Serre-MW} that $\rho_N(\mfg_{F_0}) = \GL_2(\Z/N\Z)$.  Since this holds for all $N \in \Z^+$, it follows that $\hat{\rho}$ is surjective.
\item[b)] Greicius showed \cite[Thm. 1.2]{Greicius10} that if $E_{/F_0}$ is an elliptic curve over a number field with surjective 
adelic Galois representation, then
$F_0 \cap \Q^{\ab} = \Q$ and $\sqrt{\Delta} \notin F_0 \Q^{\ab}$, where $\Delta$ is the discriminant of any Weierstrass model of $E$.  Thus if 
$[F_0:\Q] \leq 2$ the adelic Galois representation cannot be surjective.  Greicius also exhibited an elliptic curve over a non-Galois cubic field with surjective adelic Galois representation \cite[Thm. 1.5]{Greicius10}.  Zywina showed \cite{Zywina10} 
that if $F_0 \supsetneq \Q$ is a number field such that $F_0 \cap \Q^{\ab} = \Q$ then there is an elliptic curve $E_{/F_0}$ 
with surjective adelic Galois representation.  In fact he shows that ``most Weierstrass equations over 
$F_0$'' define an elliptic curve with surjective adelic Galois representation.  His work makes it plausible that when measured by height, ``most $j \in F_0$'' are truly Serre.
\end{enumerate}
\end{remark}

\section*{Acknowledgments} 
\noindent
We thank Abbey Bourdon and Drew Sutherland for helpful and stimulating conversations on the subject matter of this paper. The second author is supported by NSF award DMS-1402268.

\end{document}